\documentclass{article}

\usepackage[english]{babel}
\usepackage[letterpaper,top=2cm,bottom=2cm,left=3cm,right=3cm,marginparwidth=1.75cm]{geometry}
\usepackage[utf8]{inputenc}

\usepackage{afterpage}
\usepackage{amsfonts,amsmath,amssymb,amsthm,mathtools}
\usepackage{mathrsfs}

\usepackage{algorithm}
\usepackage{algorithmic}
\usepackage{appendix}
\usepackage{booktabs}
\usepackage{cite}
\usepackage{enumerate}
\usepackage{graphicx}
\usepackage{lscape}
\usepackage{multirow}
\usepackage{placeins}
\usepackage[figuresright]{rotating}

\usepackage[dvipsnames]{xcolor}
\usepackage[
  colorlinks=true,
  allcolors=NavyBlue,
  urlcolor=Black
]{hyperref}
\hypersetup{
  pdftitle={Convergence Analysis of the Restarted Moving-Anchored Extra-Gradient Method in the Absence of Local Lipschitz Continuity},
  pdfauthor={Defeng Sun, Liping Zhang, and Wei Zhao}
}
\usepackage{cleveref}

\theoremstyle{plain}
\newtheorem{theorem}{Theorem}
\newtheorem{lemma}{Lemma}
\newtheorem{proposition}{Proposition}
\newtheorem{assumption}{Assumption}

\theoremstyle{definition}
\newtheorem{definition}{Definition}
\newtheorem{example}{Example}

\theoremstyle{remark}
\newtheorem{remark}{Remark}

\title{Convergence Analysis of the Restarted Moving-Anchored Extra-Gradient Method in the Absence of Local Lipschitz Continuity%
\thanks{Submitted to the editors DATE. Funding: The research of Defeng Sun was supported in part by the Hong Kong RGC Senior Research Fellow Scheme No. SRFS22235S02 and the Research Center for Intelligent Operations Research and the research of Liping Zhang was supported in part by the National Natural Science Foundation of China under Grant No. 12571323.}}
\author{
Defeng Sun\thanks{Department of Applied Mathematics, The Hong Kong Polytechnic University, Hung Hom, Hong Kong. E-mail: \href{mailto:defeng.sun@polyu.edu.hk}{\nolinkurl{defeng.sun@polyu.edu.hk}}.}
\and
Liping Zhang\thanks{Department of Mathematical Sciences, Tsinghua University, Beijing, 100084, China. E-mail: \href{mailto:lipingzhang@tsinghua.edu.cn}{\nolinkurl{lipingzhang@tsinghua.edu.cn}}.}
\and
Wei Zhao\thanks{Department of Mathematical Sciences, Tsinghua University, Beijing, 100084, China; Department of Applied Mathematics, The Hong Kong Polytechnic University, Hung Hom, Hong Kong. E-mails: \href{mailto:zhaow22@mails.tsinghua.edu.cn}{\nolinkurl{zhaow22@mails.tsinghua.edu.cn}}, \href{mailto:wei22.zhao@polyu.edu.hk}{\nolinkurl{wei22.zhao@polyu.edu.hk}}.}
}

\begin{document}
\maketitle

\begin{abstract}
In this paper, we introduce the moving-anchored extra-gradient (MAEG) method for solving monotone inclusion problems involving the sum of a continuous monotone operator and a maximal monotone operator. Notably, the distance from the anchor point to the solution set is designed to be monotonically non-increasing. Under Lipschitz continuity of the forward operator, MAEG attains an $\mathcal{O}(1/k)$ non-asymptotic iteration complexity, and when a positive anchor-update parameter is used, it further achieves an $o(1/k)$ asymptotic rate. Furthermore, leveraging the specific behavior of the anchor point, we propose a tailored restart strategy. We demonstrate that this strategy ensures convergence even in the absence of local Lipschitz continuity, while preserving the original iteration complexity guarantees whenever the Lipschitz condition holds.
\end{abstract}

\noindent\textbf{Key words.} monotone inclusion, extra-gradient method, accelerated algorithm, restart strategy, non-Lipschitz operator

\medskip
\noindent\textbf{MSC codes.} 47H05, 49J40, 65K15, 90C25

\section{Introduction}
In recent years, first-order methods (FOMs) have achieved significant success in solving large-scale optimization problems, particularly in linear programming~\cite{applegate2021practical,applegate2023faster,chen2025hpr} and convex quadratic programming~\cite{chen2025hprqp}. Their key advantage lies in the low cost and high parallelizability of each iteration, making them well-suited for modern high-performance architectures like GPUs. 

In this paper, we focus on developing efficient FOMs for solving the monotone inclusion problem:
\begingroup
\renewcommand{\theHequation}{problem-mi}
\begin{equation}\tag{MI}\label{eq:mi}
    \text{find } x \in \mathbb{R}^n \text{ such that } 0 \in T(x)= (F+B)(x),
\end{equation}
\endgroup
where $ F: \operatorname{dom} F\subseteq \mathbb{R}^n \to \mathbb{R}^n $ is a monotone, continuous operator, and $ B : \mathbb{R}^n \rightrightarrows \mathbb{R}^n $ is a set-valued maximal monotone operator with $\operatorname{cl}(\operatorname{dom} B)\subseteq \operatorname{dom} F$. We assume throughout that the solution set of~\eqref{eq:mi}, denoted by $\operatorname{Sol}(F,B)$, is nonempty. 

It is worth emphasizing that our problem setting is quite general. First, we do not assume that $F$ is inverse strongly monotone
(equivalently, co-coercive), a condition commonly used in the analysis
of classical forward-backward splitting methods~\cite{lions1979splitting}. A commonly adopted but weaker assumption is that $ F(\cdot) $ is Lipschitz continuous. However, in this work, we do not require $ F(\cdot) $ to be Lipschitz continuous, either globally or even locally.

The problem~\eqref{eq:mi} can be seen as a generalization of the variational inequality (VI) problem. Let $ C \subseteq \mathbb{R}^n$ be a nonempty closed convex set and denote the normal cone mapping of $C$ as $N_C$. 
When $ B = N_C$, the problem~\eqref{eq:mi} reduces to the classical monotone VI problem:
\begingroup
\renewcommand{\theHequation}{problem-vi}
\begin{equation}\tag{VI}\label{eq:vi}
    \text{find } x_\star \in C \text{ such that } \langle F(x_\star), x - x_\star \rangle \geq 0, \text{ for all } x \in C.
\end{equation}
\endgroup
We briefly review significant algorithmic developments for solving problem~\eqref{eq:vi} in recent decades. Notably, these algorithms can be extended to address problem~\eqref{eq:mi} without substantial technical barriers. In 1976, further assuming that \(F(\cdot)\) is Lipschitz continuous with Lipschitz constant \(L\), Korpelevich~\cite{korpelevich1976extragradient} used the idea of extrapolation and proposed the well-known extra-gradient (EG) method:
\begin{equation}\label{eq:eg}
    \bar{x}_k=\Pi_C\left(x_k-\beta F\left(x_k\right)\right),\quad x_{k+1}=\Pi_C\left(x_k-\beta F\left(\bar{x}_k\right)\right),
\end{equation}
where $\beta\in(0,\frac{1}{L})$. Following Korpelevich's seminal work, many EG variants were proposed, such as the optimistic gradient descent-ascent (OGDA)~\cite{popov1980modification}, the projected reflected gradient (PRG) method~\cite{malitsky2015projected}, and the golden ratio (GR) algorithm~\cite{malitsky2020golden}. When \(F(\cdot)\) is not globally Lipschitz, Khobotov~\cite{khobotov1987modification} adapted EG by incorporating a backtracking line search, obtaining convergence under the assumption of local Lipschitz continuity\footnote{In Khobotov's paper, \(F(\cdot)\) is only assumed to be monotone and continuous. We provide a counterexample in the appendix to show that the proof does not hold without the assumption of local Lipschitz continuity.}. In~\cite{sun1993projected}, Sun proved convergence for EG with line search under only continuity and monotonicity assumptions, without additional conditions. Later, more adaptive variants of EG were proposed to solve~\eqref{eq:vi}, such as the projection and contraction (PC) method~\cite{sun1994projection,sun1995new,sun1996class}.
The underlying principles of the PC method were later extended to~\eqref{eq:mi} via the modified forward-backward splitting (MFBS) method~\cite{tseng2000modified}.

We now introduce some iteration complexity results of EG-type methods for solving problem~\eqref{eq:vi} in terms of the standard natural residual, defined as
\begin{equation}\label{eq:natural-residual}
     \mathcal{R}_{\mathrm{nat}}(x) = x - \Pi_C\left(x - F(x)\right).
\end{equation}
For the complexity results reviewed below, we assume \(F(\cdot)\) is Lipschitz continuous with constant \(L\).
It is well-known that the best iteration complexity of the EG method is \(\mathcal{O}(1/\sqrt{k})\)~\cite{korpelevich1976extragradient}. In~\cite{nemirovski2004prox}, Nemirovski analyzed the averaged iterates of the EG method~\eqref{eq:eg}, denoted by \(\bar{x}^k = \frac{1}{k} \sum_{i=1}^k \bar{x}_i\). Assuming that the feasible set \(C\) is bounded, Nemirovski established an \(\mathcal{O}(1/k)\) convergence rate with respect to the gap function, namely,
\begin{equation*}
    \sup_{y \in C}\langle F(y), \bar{x}^k - y \rangle = \mathcal{O}\left(\frac{1}{k}\right).
\end{equation*}
To remove the boundedness condition on \(C\), Monteiro and Svaiter~\cite{monteiro2010complexity, monteiro2011complexity} demonstrated that Korpelevich’s EG method can be viewed as a special case of the hybrid proximal extra-gradient (HPE) method proposed by Solodov and Svaiter~\cite{solodov1999hybrid}. They established the following bound:
\begin{equation}\label{eq:weak-residual}
    \sup _{y \in C}\langle F(y) - \bar{r}_k, \bar{x}^k - y \rangle =  \mathcal{O}\left(\frac{1}{k}\right),\quad \text{ with }\|\bar{r}_k\| = \mathcal{O}\left(\frac{1}{k}\right),
\end{equation}
where \(\bar{r}_k\) is a quantity associated with the iterates. However, this bound~\eqref{eq:weak-residual} only implies an \(\mathcal{O}(1/\sqrt{k})\) convergence rate for the natural residual~\cite[Theorem A.4 and Corollary A.2]{monteiro2010complexity}. To the best of our knowledge, whether the ergodic sequence \(\{\bar{x}^k\}\) can achieve an \(\mathcal{O}(1/k)\) convergence rate in terms of the natural residual is still unknown.

On the other hand, significant advancements have been made in analyzing the complexity of fixed-point problems. In~\cite{lieder2021convergence}, Lieder identified the optimal parameters for Halpern iteration~\cite{halpern1967fixed, wittmann1992approximation, sabach2017first}, achieving an $\mathcal{O}(1/k)$ convergence rate for the fixed-point residual for general nonexpansive operators. The Halpern iteration with optimal parameters has been used in the design of accelerated optimization algorithms~\cite{kim2021accelerated, zhang2022efficient, yang2025accelerated, sun2025accelerating}. However, the Halpern iteration cannot be directly applied to accelerate fixed-point problems of the form $x = \Pi_C ( x - \beta F( x ))$, as the operator \(\Pi_C(x - \beta F(x))\) does not meet the nonexpansiveness condition unless \(F(\cdot)\) is co-coercive. To address this limitation, Yoon and Ryu~\cite{yoon2021accelerated} integrated the EG method with an anchoring technique from Halpern iteration, introducing the extra-anchor-gradient (EAG) method for solving~\eqref{eq:vi} in the unconstrained case \(C = \mathbb{R}^n\). The EAG method is iteratively defined by
\begin{equation}\label{eq:eag}
    \bar{x}_k = \frac{1}{k+2}x_0 + \frac{k+1}{k+2}x_k - \alpha_k F(x_k),\quad 
    x_{k+1} = \frac{1}{k+2}x_0 + \frac{k+1}{k+2}x_k - \alpha_k F(\bar{x}_k).
\end{equation}
Although the stepsize sequence \(\{\alpha_k\}\) should be chosen restrictively in their analysis, EAG is the first method to achieve the \(\mathcal{O}(1/k)\) convergence rate for the natural residual, matching the theoretical optimal order for this class of problems. Building on this, Lee and Kim~\cite{lee2021fast} proposed a variant of EAG known as the fast extra-gradient (FEG) method, which is defined as:
\begin{equation}\label{eq:feg}
    \bar{x}_k = \frac{1}{k+1}x_0 + \frac{k}{k+1}(x_k - \alpha F(x_k)),\quad 
    x_{k+1} = \frac{1}{k+1}x_0 + \frac{k}{k+1}x_k - \alpha F(\bar{x}_k),
\end{equation}
where \(\alpha \in (0, 1/L)\). While maintaining the \(\mathcal{O}(1/k)\) convergence rate, the stepsize in FEG can be selected more flexibly.
Sequence convergence properties of both EAG and FEG are explored in~\cite{yoon2025accelerated} by analyzing the relationship between the trajectories they generated and the trajectory generated by Halpern accelerated PPA~\cite{kim2021accelerated}. Furthermore, extensions of these complexity and convergence results to~\eqref{eq:mi} have been investigated in~\cite{tran2023sublinear, cai2024accelerated}.

More recently, motivated by the potential for further acceleration in practice, accelerated algorithms with moving anchor points have been proposed~\cite{alcala2023moving, yuan2025symplectic,yuan2024symplectic}. In~\cite{yuan2024symplectic}, Yuan and Zhang analyzed the symplectic extra-gradient (SEG) method based on the symplectic discretization approach of certain high-resolution ODEs, which is the first extra-gradient-type method that achieves both a non-asymptotic \(\mathcal{O}(1/k)\) convergence rate and an asymptotic \(o(1/k)\) convergence rate. The iteration of SEG method is given by
\begin{equation}\label{eq:seg}
    \bar{x}_k = \frac{r}{k+r}u_k + \frac{k}{k+r}(x_k - \alpha F(x_k)),\quad 
    x_{k+1} = \frac{r}{k+r}u_k + \frac{k}{k+r}x_k - \alpha F(\bar{x}_k),
\end{equation}
where the anchor point $\{u_k\}$ is iteratively updated by $u_0 = x_0$ and $u_{k+1} = u_k - \frac{C}{r}\alpha F(x_{k+1})$ with $\alpha\in(0,1/L)$, $r\in(1,+\infty)$ and $C\in(0,r-1)$. In addition to the above complexity results, the convergence of both \(\{x_k\}\) and \(\{u_k\}\) was also established in ~\cite{yuan2024symplectic} under a fixed stepsize.

While the accelerated extra-gradient-type algorithms enjoy improved iteration complexity bounds, it is important to highlight that all the accelerated methods discussed above require the global Lipschitz continuity condition on the operator $F(\cdot)$. This is a strong assumption, which limits the class of problems that can be addressed, in contrast to the extra-gradient-type method without acceleration. The aim of this paper is to eliminate the Lipschitz continuity condition in accelerated algorithms. To achieve this, we introduce a new moving-anchor-based accelerated algorithm, called the moving-anchored extra-gradient (MAEG) method. The key difference between the MAEG method and other moving-anchor-based methods lies in the behavior of the anchor point: in the MAEG method, the distance between the anchor point and the solution set is non-increasing. In contrast, other methods lack this property. Leveraging this non-increasing property, we further propose a restart strategy. The MAEG method equipped with this strategy is referred to as MAEG-R. Both the MAEG and MAEG-R methods achieve an iteration complexity of $\mathcal{O}(1/k)$ when the operator $F(\cdot)$ is Lipschitz continuous. Notably, the MAEG-R method remains convergent even when the operator $F(\cdot)$ is neither globally nor locally Lipschitz continuous. The main contributions of this paper can be highlighted as follows:
\begin{enumerate}[(i)]
    \item We propose the MAEG method and prove the iteration complexity and convergence results under mild conditions. Specifically, when the operator $F(\cdot)$ is Lipschitz continuous, the MAEG method achieves the non-asymptotic $\mathcal{O}(1/k)$ convergence rate in terms of the composite natural residual. Moreover, with a positive anchor-update parameter, it further enjoys an asymptotic $o(1/k)$ rate.

    \item We propose a new restart strategy tailored to the MAEG method. With the new restart strategy, the MAEG remains convergent even for non-Lipschitz monotone inclusion problems while preserving the $\mathcal{O}(1/k)$ and $o(1/k)$ convergence rates in the Lipschitz case.        
    
\end{enumerate}

The remainder of this paper is organized as follows. In Section 2, we introduce some necessary preliminaries. In Section 3 and Section 4, we propose the MAEG method and the MAEG-R method respectively. Section 5 presents numerical results demonstrating the effectiveness of our algorithms. Finally, we conclude the paper in Section 6.

\noindent \textbf{Notation.} Let $\Pi_C(\cdot)$ denote the metric projection operator onto a nonempty, closed, and convex set $C$. Additionally, we use $\mathbb{B}(x, r)$ to represent the closed ball centered at $x$ with radius $r$.

\section{Preliminaries and Technical Lemmas}
In this section, we introduce the basic concepts related to problem~\eqref{eq:mi} and establish some technical lemmas that form the foundation for the subsequent analysis.

A set-valued operator $T\colon\mathbb{R}^n \rightrightarrows \mathbb{R}^n$ is called monotone if, for any $x,x'\in\mathbb{R}^n$, $v\in T(x)$, and $v'\in T(x')$, one has $\langle v-v',x-x'\rangle \ge 0$.
It is said to be maximal monotone if, in addition, the graph of the operator $ T(\cdot) $, $\operatorname{gph}(T)=\{(x, v) \in \mathbb{R}^n \times \mathbb{R}^n \mid v \in T(x)\}$,
is not properly contained in the graph of any other monotone operator. The resolvent of any monotone operator $T$ is the set-valued operator defined as $J_T = (\operatorname{Id} + T)^{-1}$, where $\operatorname{Id}$ denotes the identity operator on $\mathbb{R}^n$. If $T$ is maximal monotone, then $J_T$ is single-valued, defined on the entire space $\mathbb{R}^n$, and firmly nonexpansive, i.e. $\|J_T(x) - J_T(x')\|^2 \leq \langle J_T(x) - J_T(x'), x - x' \rangle$, for any $x, x' \in \mathbb{R}^n$.

In this work, we focus on the monotone operator $T=F+B$. The assumptions on $F$ and $B$ in the problem setting ensure, by
\cite[Proposition A.1]{monteiro2011complexity}, that $T$ is maximal monotone. Consequently, its zero set $\operatorname{Sol}(F,B)=T^{-1}(0)$ is closed and convex. To measure the quality of approximate solutions to~\eqref{eq:mi}, we introduce the notion of the composite natural residual mapping. 

\begin{definition}
Let $F \colon \operatorname{dom} F \subseteq \mathbb{R}^n \to \mathbb{R}^n$ be a monotone and continuous operator, and let $B \colon \mathbb{R}^n \rightrightarrows \mathbb{R}^n$ be a maximal monotone operator. The composite natural residual mapping associated with~\eqref{eq:mi} is defined by
\begin{equation*}
    \mathcal{R}_{\mathrm{nat}}^c(x, \lambda) \coloneqq \frac{1}{\lambda}\left( x - J_{\lambda B} \big( x - \lambda F(x) \big) \right), \quad \forall x \in \operatorname{dom} F, \, \lambda > 0.
\end{equation*}
When $\lambda = 1$, we abbreviate $\mathcal{R}_{\mathrm{nat}}^c(x, 1)$ as $\mathcal{R}_{\mathrm{nat}}^c(x)$.
\end{definition}

Note that $x_\star \in \operatorname{Sol}(F,B)$ if and only if $\mathcal{R}_{\mathrm{nat}}^c(x_\star, \lambda) = 0$ for any $\lambda > 0$. Next, we present a lemma concerning the continuity of the resolvents $J_{\lambda B}(x)$ as a function of both $\lambda$ and $x$.

\begin{lemma}\label{lem:resolvent-continuity}
Let $ B \colon \mathbb{R}^n \rightrightarrows \mathbb{R}^n $ be a maximal monotone operator. Then the following statements hold:
\begin{enumerate}[(i)]
    \item $\operatorname{cl}(\operatorname{dom} B)$ is a nonempty, closed and convex set.
    \item For any $x \in \mathbb{R}^n$, $J_{\lambda B}(x) \to \Pi_{\operatorname{cl}(\operatorname{dom} B)}(x)$ as $\lambda \to 0^+$.
    \item The mapping $\varphi(\cdot,\cdot)\colon\mathbb{R}^n\times \mathbb{R}_+\to \mathbb{R}^n $ defined by
    \begin{equation}\label{eq:varphi}
    \varphi(x,\lambda) = 
    \begin{cases}
    J_{\lambda B}(x), & \text{if } \lambda > 0 \\
    \Pi_{\operatorname{cl}(\operatorname{dom} B)}(x), & \text{if } \lambda = 0
    \end{cases}
    \end{equation}
    is jointly continuous on $\mathbb{R}^n \times \mathbb{R}_+$.
\end{enumerate}
\end{lemma}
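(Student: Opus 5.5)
Parts (i) and (ii) are classical facts about maximal monotone operators in finite dimensions, and we will cite them rather than reprove them. For (i), nonemptiness is immediate because a maximal monotone operator has nonempty graph. Convexity of $\operatorname{cl}(\operatorname{dom} B)$ is Minty's/Rockafellar's theorem: the closure of the domain of a maximal monotone operator on $\mathbb{R}^n$ is convex (e.g., Bauschke--Combettes, Corollary 21.14). For (ii), we invoke the standard result that $J_{\lambda B}(x)\to \Pi_{\operatorname{cl}(\operatorname{dom} B)}(x)$ as $\lambda\downarrow 0$ (Bauschke--Combettes, Theorem 23.48). If a self-contained argument is wanted, set $p_\lambda = J_{\lambda B}(x)$ and $D=\operatorname{cl}(\operatorname{dom} B)$. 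Fix any $(y,v)\in\operatorname{gph} B$. Monotonicity applied to $(x-p_\lambda)/\lambda\in B(p_\lambda)$ gives $\langle x-p_\lambda-\lambda v,\,p_\lambda-y\rangle\ge 0$. This yields boundedness of $\{p_\lambda\}$ as $\lambda\to 0$, and for any cluster point $\bar p\in D$ it gives $\langle x-\bar p, \bar p-y\rangle\ge 0$ for all $y\in\operatorname{dom} B$, hence for all $y\in D$ by density. Therefore $\bar p=\Pi_D(x)$, and the whole family converges.

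For (iii), we split into the two cases $\lambda_0>0$ and $\lambda_0=0$ at a limit point $(x_0,\lambda_0)$, with $(x_k,\lambda_k)\to(x_0,\lambda_0)$, $\lambda_k\ge0$.

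\emph{Case $\lambda_0>0$.} Nonexpansiveness gives $\|J_{\lambda_k B}(x_k)-J_{\lambda_k B}(x_0)\|\le\|x_k-x_0\|$, so it suffices to prove continuity of $\lambda\mapsto J_{\lambda B}(x_0)$ on $(0,\infty)$. We use the resolvent identity $J_{\lambda B}(x)=J_{\mu B}\big(\tfrac{\mu}{\lambda}x+(1-\tfrac{\mu}{\lambda})J_{\lambda B}(x)\big)$. Together with nonexpansiveness, it gives $\|J_{\lambda B}(x)-J_{\mu B}(x)\|\le|1-\mu/\lambda|\,\|x-J_{\lambda B}(x)\|$, which tends to zero as $\lambda\to\mu$ once $\|x-J_{\lambda B}(x)\|$ is bounded locally in $\lambda$. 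That local bound follows from the same inequality with $\lambda,\mu$ swapped.

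\emph{Case $\lambda_0=0$.} This is the main obstacle, because $\lambda_k$ may be zero for some $k$ and positive for others, and (ii) only controls a fixed point $x$. The plan is to reduce to a fixed point. Nonexpansiveness of both $J_{\lambda_k B}$ and $\Pi_D$ gives $\|\varphi(x_k,\lambda_k)-\varphi(x_0,\lambda_k)\|\le\|x_k-x_0\|$, where the $\lambda_k=0$ terms use the projection. Hence it suffices that $\varphi(x_0,\lambda_k)\to\Pi_D(x_0)$. The terms with $\lambda_k=0$ equal $\Pi_D(x_0)$ exactly, and the terms with $\lambda_k>0$ converge to $\Pi_D(x_0)$ by (ii). Combining both subsequences yields joint continuity on $\mathbb{R}^n\times\mathbb{R}_+$.
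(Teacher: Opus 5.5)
Your proof is correct. For (i) and (ii) it matches the paper, which also just cites \cite{bauschke2017convex} (Corollary~21.14 and Theorem~23.48). Your optional self-contained sketch of (ii), built on $\langle x-p_\lambda-\lambda v,\,p_\lambda-y\rangle\ge 0$, is also sound.

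The real difference is in (iii). The paper gives no argument there and simply invokes \cite[Proposition~3.4]{atenas2025relocated}, whereas you prove it directly. Your key observation is that $\varphi(\cdot,\lambda)$ is nonexpansive for every $\lambda\ge 0$: it is a resolvent for $\lambda>0$ and the projection onto the closed convex set $\operatorname{cl}(\operatorname{dom} B)$ for $\lambda=0$. This uniform equicontinuity in $x$ reduces joint continuity to continuity of $\lambda\mapsto\varphi(x_0,\lambda)$ for a fixed $x_0$. At $\lambda_0=0$ that is exactly (ii), and at $\lambda_0>0$ it follows from the resolvent identity.

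Your route shows that (iii) is an immediate consequence of (ii) rather than an independent fact. It gives explicit moduli: $\varphi$ is $1$-Lipschitz in $x$ uniformly in $\lambda$, and locally Lipschitz in $\lambda>0$ for fixed $x$. Since it uses only norm estimates plus (ii), it also carries over to Hilbert spaces. The paper's route is simply shorter, deferring to a published result.

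One small simplification: you do not need the detour through a local bound on $\|x-J_{\lambda B}(x)\|$. Taking $\mu=\lambda_0$ in the swapped inequality gives directly $\|J_{\lambda B}(x_0)-J_{\lambda_0 B}(x_0)\|\le|1-\lambda/\lambda_0|\,\|x_0-J_{\lambda_0 B}(x_0)\|\to 0$ as $\lambda\to\lambda_0$.
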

\begin{proof}
Statements \textnormal{(i)} and \textnormal{(ii)} follow directly from~\cite[Corollary~21.14]{bauschke2017convex} and~\cite[Theorem~23.48]{bauschke2017convex}, respectively.  
Statement \textnormal{(iii)} is a consequence of~\cite[Proposition~3.4]{atenas2025relocated}.
\end{proof}

The following lemma is a generalization of~\cite[Lemma 1]{gafni1984two} and~\cite[Lemma 2.2]{calamai1987projected}.
\begin{lemma}\label{lem:nonincreasing-resolvent}
Let $B:\mathbb{R}^n\rightrightarrows\mathbb{R}^n$ be a maximal monotone operator. Then, for any fixed $x,d\in\mathbb{R}^n$, the function $\phi:(0,\infty)\to\mathbb{R}$ defined by
$\phi(\lambda):=\|J_{\lambda B}(x+\lambda d)-x\|/\lambda$
is non-increasing.
\end{lemma}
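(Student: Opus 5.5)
We follow the classical argument of Gafni--Bertsekas for projections and replace the projection inequality by the firm nonexpansiveness and monotonicity of $B$. Fix $x,d$ and $0<\lambda_1<\lambda_2$. Set
\[
z_i = J_{\lambda_i B}(x+\lambda_i d), \qquad w_i = z_i - x \quad (i=1,2).
\]
The goal is $\|w_2\|/\lambda_2 \le \|w_1\|/\lambda_1$. If $w_1=0$, then $x\in\operatorname{dom}B$ and $-d\in B(x)$. Hence $x=J_{\lambda B}(x+\lambda d)$ for every $\lambda>0$, so $\phi\equiv 0$ and there is nothing to prove. We therefore assume $w_1\neq 0$; if $w_2=0$ the inequality is trivial, so we also assume $w_2\neq 0$.

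The resolvent characterization gives
\[
v_i := \frac{x+\lambda_i d - z_i}{\lambda_i} = d - \frac{w_i}{\lambda_i} \in B(z_i).
\]
Monotonicity of $B$ then yields $\langle v_2 - v_1, z_2 - z_1\rangle \ge 0$, that is,
\[
\Big\langle \frac{w_1}{\lambda_1} - \frac{w_2}{\lambda_2},\, w_2 - w_1\Big\rangle \ge 0 .
\]
Write $a=\|w_1\|$, $b=\|w_2\|$ and $c=\langle w_1,w_2\rangle$. Expanding gives
\[
\frac{c}{\lambda_1} - \frac{a^2}{\lambda_1} - \frac{b^2}{\lambda_2} + \frac{c}{\lambda_2} \ge 0,
\qquad\text{i.e.}\qquad
c\Big(\frac{1}{\lambda_1}+\frac{1}{\lambda_2}\Big) \ge \frac{a^2}{\lambda_1} + \frac{b^2}{\lambda_2}.
\]
Since $c\le ab$, this implies $ab(\lambda_1+\lambda_2) \ge a^2\lambda_2 + b^2\lambda_1$. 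Rearranging gives
\[
(a\lambda_2 - b\lambda_1)(b - a) \ge 0 .
\]

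From this single inequality we want both monotonicity statements: $b\ge a$ (so $\|w\|$ is nondecreasing, not needed here) and $b/\lambda_2\le a/\lambda_1$. The product inequality alone is not enough, so we need a second relation. The cases are:
\begin{itemize}
\item If $b>a$, then $a\lambda_2 \ge b\lambda_1$, i.e.\ $b/\lambda_2 \le a/\lambda_1$, which is the claim.
\item If $b=a$, then $b/\lambda_2 = a/\lambda_2 < a/\lambda_1$.
\item If $b<a$, then $a\lambda_2\le b\lambda_1 < a\lambda_1$, contradicting $\lambda_1<\lambda_2$ since $a>0$. So this case cannot occur.
\end{itemize}
In every possible case $\phi(\lambda_2)\le\phi(\lambda_1)$.

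The main obstacle I anticipate is the expansion of the monotonicity inequality and the use of Cauchy--Schwarz in the correct direction, which is why I factored it explicitly. The degenerate cases $a=0$ or $b=0$ are handled at the start. No continuity results such as Lemma~\ref{lem:resolvent-continuity} are needed: the whole argument uses only the resolvent identity and the monotonicity of $B$.
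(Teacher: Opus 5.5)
Your proof is correct, and it takes a different and more elementary route than the paper.

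\textbf{The paper's route.} With $\lambda_1>\lambda_2$, $a=z_1-x$ and $b=z_2-x$, the paper first derives $\langle b,a-b\rangle\ge\frac{\lambda_2}{\lambda_1}\langle a,a-b\rangle$ from monotonicity. It then applies firm nonexpansiveness of $J_{\lambda_2 B}$ at the auxiliary point $z_3=z_1+\lambda_2 v_1$ to get $\langle a,a-b\rangle\ge\frac{\lambda_1}{\lambda_1-\lambda_2}\|a-b\|^2$, hence $\langle b,a-b\rangle>0$ when $z_1\neq z_2$. Finally it uses the vector implication $\langle b,a-b\rangle>0\Rightarrow\|a\|/\|b\|\le\langle a,a-b\rangle/\langle b,a-b\rangle$. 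This mirrors the Gafni--Bertsekas and Calamai--Mor\'e proofs for projections. The firm-nonexpansiveness step is in fact the same monotonicity inequality rewritten, so both proofs rest on a single inequality.

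\textbf{Your route.} You scalarize that inequality with Cauchy--Schwarz and factor it as $(a\lambda_2-b\lambda_1)(b-a)\ge 0$, after which a three-case check finishes the argument. This avoids the auxiliary ratio implication, which the paper states without proof. It also shows for free that $\lambda\mapsto\|J_{\lambda B}(x+\lambda d)-x\|$ is nondecreasing. The paper's version has the merit of being a line-by-line generalization of the classical projection argument it cites.

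\textbf{Two small remarks.} First, if $w_1=0$ the resolvent identity gives $d\in B(x)$, not $-d\in B(x)$. Your conclusion is unaffected, and the factored inequality with $a=0$ already forces $-b^2\lambda_1\ge 0$, i.e.\ $b=0$, so no separate treatment is needed. Second, your sentence saying the product inequality ``alone is not enough'' is inaccurate. Your case analysis uses nothing beyond that inequality, $\lambda_1<\lambda_2$, and $a>0$.
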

\begin{proof}
Let $\lambda_1 > \lambda_2 > 0$ be given. Denote $z_1 = J_{\lambda_1 B}(x + \lambda_1 d)$ and $z_2 = J_{\lambda_2 B}(x + \lambda_2 d)$. If $z_1 = z_2$, then clearly $\phi(\lambda_1) \leq \phi(\lambda_2)$. Thus, we assume $z_1 \neq z_2$. We will use the following implication to prove the result:
\begin{equation}\label{eq:ratio-bound}
   \text{for any } a,b\in \mathbb{R}^n , \langle b, a - b \rangle > 0 \implies \frac{\| a \|}{\| b \|} \leq \frac{\langle a , a - b \rangle}{\langle b , a - b \rangle}.
\end{equation}
Define $a = z_1 - x$, $b = z_2 - x$, $v_1 = d - \frac{a}{\lambda_1} \in B(z_1)$ and $v_2 = d - \frac{b}{\lambda_2} \in B(z_2)$. Using the monotonicity of $B(\cdot)$, we obtain $ \left\langle \frac{b}{\lambda_2} - \frac{a}{\lambda_1}, a - b \right\rangle = \left\langle v_1 - v_2, z_1 - z_2\right\rangle\ge 0 $, which implies
$\langle b, a - b \rangle \ge \frac{\lambda_2}{\lambda_1} \langle a, a - b \rangle$.

Furthermore, let $z_3 = z_1 + \lambda_2 v_1$. Then we have $z_1 = J_{\lambda_2 B}(z_3)$. Using the firm nonexpansiveness of the resolvent $J_{\lambda_2 B}(\cdot)$, we have $\| z_1 - z_2 \|^2 \leq \langle z_1 - z_2, z_3 - (x + \lambda_2 d) \rangle = \left( 1 - \frac{\lambda_2}{\lambda_1} \right) \langle a - b, a \rangle$. Since $z_1 \neq z_2$, it follows that $\langle b, a - b \rangle \ge \frac{\lambda_2}{\lambda_1} \langle a, a - b \rangle \ge \frac{\lambda_2}{\lambda_1 - \lambda_2} \| z_1 - z_2 \|^2 > 0$. Finally, applying the observation~\eqref{eq:ratio-bound}, we obtain
\begin{equation*}
    \frac{\| J_{\lambda_1 B}(x + \lambda_1 d) - x \|}{\| J_{\lambda_2 B}(x + \lambda_2 d) - x \|} = \frac{\| a \|}{\| b \|} \leq \frac{\langle a, a - b \rangle}{\langle b, a - b \rangle} \leq \frac{\lambda_1}{\lambda_2},
\end{equation*}
which implies $\phi(\lambda_1) \leq \phi(\lambda_2)$ for $\lambda_1 > \lambda_2$. Hence, $\phi(\cdot)$ is non-increasing.
\end{proof}

We end this section with the following lemma.
\begin{lemma}[Lemma 2.5 in~\cite{xu2002iterative}]\label{lem:xu}
Let $\{s_k\}$ be a sequence of nonnegative real numbers such that
$s_{k+1} \le (1-\alpha_k)s_k+\alpha_k\beta_k+\gamma_k$ for every $k\ge 1$,
where $\alpha_k\in(0,1)$, $\prod_{k=1}^\infty(1-\alpha_k)=0$,
$\limsup_{k\to\infty}\beta_k\le 0$, and $\gamma_k\ge 0$ with
$\sum_{k=1}^\infty\gamma_k<\infty$. Then $s_k\to 0$ as $k\to\infty$.
\end{lemma}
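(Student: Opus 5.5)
This is a cited result (Lemma 2.5 in Xu), so I will give the standard self-contained argument rather than invoke the reference. The plan has two parts: first turn the recursion into an estimate on $\limsup_k s_k$ by unrolling it from a large starting index $m$, then let $m\to\infty$.

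\emph{Unrolling from a tail index.} Fix $\varepsilon>0$. Since $\limsup_{k}\beta_k\le 0$ and $\sum_k\gamma_k<\infty$, I choose $m\ge 1$ such that $\beta_k\le\varepsilon$ for all $k\ge m$. Because $\alpha_k\in(0,1)$, every factor $1-\alpha_k$ is positive, so multiplying inequalities by these factors preserves them. For $k\ge m$ I therefore get $s_{k+1}\le(1-\alpha_k)s_k+\alpha_k\varepsilon+\gamma_k$. Setting $t_k=s_k-\varepsilon$, this reads $t_{k+1}\le(1-\alpha_k)t_k+\gamma_k$. Induction, together with the bound $\prod_{i=j+1}^{k}(1-\alpha_i)\le 1$, then gives
\[
t_{k+1}\le \Big(\prod_{i=m}^{k}(1-\alpha_i)\Big)t_m+\sum_{j=m}^{k}\gamma_j .
\]
One care point: $t_m$ may be negative. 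In that case the first term is at most $0$, and I simply bound it by $\max(t_m,0)\prod_{i=m}^{k}(1-\alpha_i)$, which keeps the inequality valid in both cases.

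\emph{Passing to the limit.} The assumption $\prod_{k=1}^{\infty}(1-\alpha_k)=0$, with all factors in $(0,1)$, implies that the tail product $\prod_{i=m}^{k}(1-\alpha_i)\to 0$ as $k\to\infty$ for each fixed $m$. This is because the finitely many factors with index below $m$ form a positive constant that can be divided out. Letting $k\to\infty$ yields
\[
\limsup_{k\to\infty}s_k\le\varepsilon+\sum_{j\ge m}\gamma_j .
\]
This holds for every $m$ beyond the threshold, so I let $m\to\infty$; the summability of $\{\gamma_k\}$ kills the tail sum. Then I let $\varepsilon\to 0$. Since $s_k\ge 0$, it follows that $s_k\to 0$.

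\emph{Main obstacle.} The only delicate point is the possible negativity of $t_m$ noted above, and taking the positive part handles it. Otherwise the argument is a routine telescoping estimate.
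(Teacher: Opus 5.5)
Your proof is correct. You fix $\varepsilon>0$, unroll $t_{k+1}\le(1-\alpha_k)t_k+\gamma_k$ from a tail index $m$ with $\beta_k\le\varepsilon$, use that the tail product $\prod_{i=m}^{k}(1-\alpha_i)$ tends to zero because every factor is positive, and then send $m\to\infty$ and $\varepsilon\to 0$. This is essentially the standard argument behind Xu's Lemma~2.5, which the paper only cites and does not prove.

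The positive-part precaution for $t_m<0$ is harmless but unnecessary, because $\prod_{i=m}^{k}(1-\alpha_i)\,t_m\to 0$ whatever the sign of $t_m$.
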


\section{The Moving-Anchored Extra-Gradient Method}

In this section, we present the moving-anchored extra-gradient (MAEG) method for solving~\eqref{eq:mi}, as outlined in Algorithm~\ref{alg:maeg}. We then establish its iteration complexity and convergence results under mild assumptions.

\begin{algorithm}[!htbp]
    \caption{Moving-Anchored Extra-Gradient (MAEG) Method for Solving~\eqref{eq:mi}}
    \label{alg:maeg}
    \begin{algorithmic}[1]
 
    \STATE \textbf{Input:} Initial point $x_0 \in \operatorname{cl}(\operatorname{dom} B)$; parameters $\sigma \in (0,1)$ and $\rho \in \bigl[0, \frac{1}{2}\bigr)$; and a nonempty, closed, convex set $X$ such that $\operatorname{cl}(\operatorname{dom} B) \subseteq X \subseteq \operatorname{dom} F$.

    \STATE \textbf{Initialization:} Set $u_{0} = y_{0} = x_0$, $d_0 = 0$, $\Lambda_0 = 0 $ and $k = 1$.

    \STATE \textbf{Step 1:} Compute $x_k$ and $y_k$:
    \begin{equation}\label{eq:x-update}
        x_{k} = \Pi_X\left(\tau_k u_{k-1} + (1-\tau_k)(y_{k-1} - \lambda_k d_{k-1})\right),
    \end{equation}
    \begin{equation}\label{eq:y-update}
        y_k = J_{\lambda_k B}\left(\tau_k u_{k-1} + (1-\tau_k)y_{k-1} - \lambda_k F(x_{k})\right),
    \end{equation}
    where $\tau_k = \frac{\lambda_k}{(1-2\rho)\Lambda_{k-1} + \lambda_k}$, $\Lambda_k = \sum_{i=1}^{k} \lambda_i$
    and $\lambda_k > 0$ is chosen to satisfy
    \begin{equation}\label{eq:line-search}
        \lambda_{k}\|F(x_{k}) - F(y_{k})\| \leq \sigma\|x_{k} - y_{k}\|.
    \end{equation}

    \STATE \textbf{Step 2:} Update the descent direction and anchor:
    \begin{equation}\label{eq:d-update}
        d_k = \frac{1}{\lambda_k}\left(\tau_k u_{k-1} + (1-\tau_k)y_{k-1}-y_k\right) - F(x_k) + F(y_k),
    \end{equation}
    \begin{equation}\label{eq:u-update}
        u_{k} = u_{k-1} - \rho \lambda_k d_k.
    \end{equation}
    Set $k \gets k+1$ and return to \textbf{Step 1}.

    \STATE \textbf{Output:} The sequence $\{(x_{k}, y_{k}, u_{k})\}$.

    \end{algorithmic}
\end{algorithm}

\begin{remark}
    When $X=\operatorname{dom}F=\mathbb{R}^n$ and $F$ is Lipschitz continuous, Algorithm~\ref{alg:maeg} connects to two existing methods. For $\rho=0$, it has the same anchored extra-gradient structure as the composite Fast Extra-Gradient method~\cite{lee2021fast,tran2023sublinear}, up to the stepsize and weight choices. For $0<\rho<\frac{1}{2}$, it recovers the Symplectic Forward-Backward Splitting method~\cite[Algorithm 3]{yuan2024symplectic} with $r = \frac{1}{1-2\rho}$ and $D = \frac{2\rho}{1-2\rho}$. The projection onto $X$ in~\eqref{eq:x-update} ensures that $F(x_k)$ in~\eqref{eq:y-update} is well-defined; as illustrated in the numerical experiments, a suitable choice of $X$ can significantly improve the efficiency of the line search.
\end{remark}

To ensure that Algorithm~\ref{alg:maeg} is well-defined and to facilitate the subsequent convergence analysis, we impose the following assumptions.

\begin{assumption}\label{asmp:existence}
The solution set of~\eqref{eq:mi}, denoted by $\operatorname{Sol}(F,B)$, is nonempty.    
\end{assumption}

\begin{assumption}\label{asmp:line-search}
For every $k$-th ($k \geq 1$) iteration of Algorithm~\ref{alg:maeg}, there exists a stepsize $\lambda_k > 0$ such that condition~\eqref{eq:line-search} is satisfied. 
\end{assumption}

Since Assumption~\ref{asmp:line-search} is non-trivial, we now examine the conditions under which the assumption holds in the following remark.

\begin{remark}
First, consider the case where $F(\cdot)$ is globally Lipschitz continuous with constant $L$. In this setting, condition~\eqref{eq:line-search} is theoretically guaranteed for any $\lambda_k \in (0, \sigma/L]$. Thus, Assumption~\ref{asmp:line-search} is valid whether one utilizes a fixed stepsize in this range or a standard backtracking line search, which is guaranteed to terminate due to the existence of this feasible interval.
Next, consider the case where $F(\cdot)$ is only locally Lipschitz continuous. For any iterate $y_{k-1}$, let $U_{k-1}$ be a compact neighborhood of $y_{k-1}$ on which $F$ is Lipschitz continuous. Since $x_k \to y_{k-1}$ as $\lambda_k \to 0^+$, we have $x_k \in U_{k-1}$ for sufficiently small $\lambda_k$. Given that $F(x_k)$ is bounded within $U_{k-1}$, it follows from Lemma~\ref{lem:resolvent-continuity}(iii) that $y_k \to \Pi_{\operatorname{cl}(\operatorname{dom} B)}(y_{k-1}) = y_{k-1}$ as $\lambda_k \to 0^+$. Consequently, $y_k$ also falls within $U_{k-1}$ for sufficiently small $\lambda_k$. This guarantees that a backtracking line search will eventually find a valid $\lambda_k > 0$ satisfying~\eqref{eq:line-search}.
Finally, in cases where $F(\cdot)$ fails to satisfy even local Lipschitz continuity, Assumption~\ref{asmp:line-search} cannot be theoretically guaranteed via line search alone. To address this, we propose corresponding restart criteria and a restart strategy designed to maintain convergence, which are detailed in Section~\ref{sec:maeg-r}.
\end{remark}

We employ the norm of the direction $d_k$ as a metric for the optimality of the iteration sequence. 
To validate this choice, we observe from the update rule~\eqref{eq:y-update} and the definition of the resolvent that
\begin{equation*}
    \lambda_k^{-1}\left(\tau_k u_{k-1} + (1-\tau_k)y_{k-1} - \lambda_k F(x_{k}) - y_k \right) \in B(y_k),
\end{equation*}
which implies $d_k \in (F+B)(y_k)$. We now establish the relationship between the standard natural residual $\| \mathcal{R}_{\mathrm{nat}}^c( y_k ) \|$ and $\| d_k \|$ in the following proposition.

\begin{proposition}\label{prop:reduced-gradient-norm}
Suppose Assumption~\ref{asmp:line-search} holds, and let $\{y_k\}$ and $\{d_k\}$ be generated by Algorithm~\ref{alg:maeg}. Then, for all $k \ge 1$, it holds that $ \| \mathcal{R}_{\mathrm{nat}}^c (y_k)\| \leq \| d_k \|$.        
\end{proposition}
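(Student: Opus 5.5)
Write $w_k := d_k - F(y_k)$. By the inclusion noted just before the proposition, $w_k \in B(y_k)$, so $d_k = F(y_k) + w_k$ with $w_k \in B(y_k)$. The plan is to show the general fact that for any $y \in \operatorname{dom} B$ and any $w \in B(y)$ we have $\|\mathcal{R}_{\mathrm{nat}}^c(y)\| \le \|F(y)+w\|$. The proposition then follows by taking $y = y_k$ and $w = w_k$. This is the standard estimate that the natural residual is dominated by the norm of any element of $T(y)$.

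\textbf{Step 1: rewrite $y$ as a resolvent.} Since $w \in B(y)$, we have $y + w \in (\operatorname{Id}+B)(y)$, hence $y = J_{B}(y + w)$. Set $z := J_B\bigl(y - F(y)\bigr)$, so that $\mathcal{R}_{\mathrm{nat}}^c(y) = y - z$.

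\textbf{Step 2: nonexpansiveness.} The resolvent $J_B$ is firmly nonexpansive and hence nonexpansive. Therefore
\begin{equation*}
\|\mathcal{R}_{\mathrm{nat}}^c(y)\| = \bigl\|J_B(y+w) - J_B\bigl(y-F(y)\bigr)\bigr\| \le \bigl\|(y+w) - \bigl(y - F(y)\bigr)\bigr\| = \|w + F(y)\|.
\end{equation*}

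\textbf{Step 3: conclude.} With $y = y_k$ and $w = w_k$, the right-hand side is $\|d_k\|$, which gives the claim.

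The only point needing care is that everything is well defined. Under Assumption~\ref{asmp:line-search} the iterates exist. Also $y_k \in \operatorname{dom} B \subseteq \operatorname{dom} F$, so $F(y_k)$ is defined, as is the membership $w_k \in B(y_k)$ obtained from \eqref{eq:y-update} and \eqref{eq:d-update}. No real obstacle is expected. Lemma~\ref{lem:nonincreasing-resolvent} is not needed here, because $\lambda = 1$ is fixed in the statement.
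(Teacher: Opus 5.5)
Your proof is correct and is essentially the paper's own argument. Both use $d_k - F(y_k) \in B(y_k)$ to write $y_k = J_B\bigl(y_k + d_k - F(y_k)\bigr)$, and then bound $\|\mathcal{R}_{\mathrm{nat}}^c(y_k)\|$ by the nonexpansiveness of $J_B$ applied against $J_B\bigl(y_k - F(y_k)\bigr)$.
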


\begin{proof}
Since $d_k - F(y_k) \in B(y_k)$, we can write $y_k = J_B(y_k + d_k - F(y_k))$. By the nonexpansiveness of the resolvent, it follows that for all $k \ge 1$:
\begin{equation*}
    \| \mathcal{R}_{\mathrm{nat}}^c(y_k) \| = \| y_k - J_B(y_k - F(y_k)) \| = \| J_B(y_k + d_k - F(y_k)) - J_B(y_k - F(y_k)) \| \leq \| d_k \|,
\end{equation*}
which completes the proof.
\end{proof}

We now proceed to analyze the complexity of the proposed MAEG method. Our analysis relies centrally on the auxiliary function $\mathcal{L}_k$, defined as
\begin{equation}\label{eq:lk}
    \mathcal{L}_k = \langle u_k-y_k, d_k \rangle - \frac{1}{2}(1-2\rho)\Lambda_k\|d_k\|^2 \text{ for all } k \ge 0 .
\end{equation}
The key recursive property of $\mathcal{L}_k$ is established in the following proposition.

\begin{proposition}\label{prop:lk-recursion}
Suppose that Assumption~\ref{asmp:line-search} holds. Let the sequences $\{x_k\}$, $\{y_k\}$, and $\{u_k\}$ be generated by Algorithm~\ref{alg:maeg}, and let $\mathcal{L}_k$ be defined as in~\eqref{eq:lk}. Then, for all $k \ge 1$, it holds that
\begin{equation*}
    \mathcal{L}_k - (1-\tau_k)\mathcal{L}_{k-1} \ge \frac{\lambda_k(1-\sigma^2)}{2\tau_k\sigma^2} \|F(x_k)-F(y_k)\|^2.
\end{equation*}
\end{proposition}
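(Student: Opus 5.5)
The proof is a single algebraic identity followed by two inequalities: the monotonicity of $F+B$ between consecutive iterates, and the line-search condition~\eqref{eq:line-search} converted into a bound on $e_k:=F(x_k)-F(y_k)$.

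\textbf{Notation and two facts.} Write $w_{k-1}:=\tau_k u_{k-1}+(1-\tau_k)y_{k-1}$. From~\eqref{eq:d-update},
\begin{equation*}
\lambda_k d_k = w_{k-1}-y_k-\lambda_k e_k,
\qquad\text{equivalently}\qquad
y_k-y_{k-1}=\tau_k(u_{k-1}-y_{k-1})-\lambda_k(d_k+e_k).
\end{equation*}
The definition of $\tau_k$ gives $(1-2\rho)\Lambda_{k-1}=\lambda_k(1-\tau_k)/\tau_k$, and hence
\begin{equation*}
(1-2\rho)\Lambda_k=\lambda_k\Bigl(\tfrac{1-\tau_k}{\tau_k}+1-2\rho\Bigr).
\end{equation*}
The anchor update~\eqref{eq:u-update} gives $u_k-y_k=(u_{k-1}-y_k)-\rho\lambda_k d_k$. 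Substituting these relations, $\mathcal{L}_k-(1-\tau_k)\mathcal{L}_{k-1}$ becomes a quadratic form in $d_k$, $d_{k-1}$, $e_k$ and $u_{k-1}-y_{k-1}$.

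\textbf{Step 1 (monotonicity).} We have $d_k\in(F+B)(y_k)$ for every $k\ge1$. For $k=1$ the inequality is trivial because $d_0=0$ and $\Lambda_0=0$, so $\mathcal{L}_0=0$ and $\tau_1=1$. For $k\ge2$, maximal monotonicity of $T=F+B$ gives $\langle d_k-d_{k-1},y_k-y_{k-1}\rangle\ge0$. Substituting the expression for $y_k-y_{k-1}$ and multiplying by $(1-\tau_k)/\tau_k$, I expect this term to eliminate the cross term $(1-\tau_k)\langle u_{k-1}-y_{k-1},d_{k-1}\rangle$ that appears in $(1-\tau_k)\mathcal{L}_{k-1}$. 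After this substitution, $\mathcal{L}_k-(1-\tau_k)\mathcal{L}_{k-1}$ should be bounded below by an explicit quadratic in $(d_k,d_{k-1},e_k)$ alone. The $\rho$-terms from $u_k=u_{k-1}-\rho\lambda_k d_k$ should be cancelled by the $-\tfrac12(1-2\rho)\lambda_k\|d_k\|^2$ part of the $\Lambda_k$ coefficient.

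\textbf{Step 2 (line search).} The argument of $\Pi_X$ in~\eqref{eq:x-update} equals $w_{k-1}-(1-\tau_k)\lambda_k d_{k-1}$. Since $y_k\in\operatorname{dom}B\subseteq X$ and $\Pi_X$ is nonexpansive,
\begin{equation*}
\|x_k-y_k\|\le\lambda_k\|d_k+e_k-(1-\tau_k)d_{k-1}\|.
\end{equation*}
Combining this with~\eqref{eq:line-search} yields
\begin{equation*}
\|e_k\|^2\le\sigma^2\|d_k+e_k-(1-\tau_k)d_{k-1}\|^2 .
\end{equation*}
Equivalently, $\frac{\lambda_k}{2\tau_k}\|d_k+e_k-(1-\tau_k)d_{k-1}\|^2-\frac{\lambda_k}{2\tau_k\sigma^2}\|e_k\|^2\ge0$.

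\textbf{Main obstacle.} The goal is to show that the residual quadratic from Step 1 equals $\frac{\lambda_k}{2\tau_k}\|d_k+e_k-(1-\tau_k)d_{k-1}\|^2-\frac{\lambda_k}{2\tau_k}\|e_k\|^2$, possibly plus a nonnegative term. Adding the Step 2 inequality then gives exactly $\frac{\lambda_k}{2\tau_k}\bigl(\frac{1}{\sigma^2}-1\bigr)\|e_k\|^2$, which is the claimed bound. Verifying this identity is the delicate part: every coefficient of $\|d_k\|^2$, $\|d_{k-1}\|^2$, $\langle d_k,d_{k-1}\rangle$, $\langle d_k,e_k\rangle$ and $\langle d_{k-1},e_k\rangle$ must match, and the $\rho$-dependence must cancel entirely. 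I would carry out this bookkeeping by expanding both sides in the variables $a:=\lambda_k d_k$, $b:=\lambda_k(1-\tau_k)d_{k-1}$, $c:=\lambda_k e_k$ and comparing coefficients. If an exact match fails, I would expect the discrepancy to be a perfect square with a nonnegative coefficient, which can simply be dropped.
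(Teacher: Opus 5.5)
Your proposal is correct and is essentially the paper's proof: monotonicity applied to $\frac{1-\tau_k}{\tau_k}\langle y_k-y_{k-1},d_k-d_{k-1}\rangle$ removes the $u_{k-1}-y_{k-1}$ terms, and the identity you anticipate holds exactly with no leftover square (with $a=d_k$, $b=(1-\tau_k)d_{k-1}$, $c=e_k$ one has $\langle a+c,a-b\rangle-\tfrac12\|a\|^2+\tfrac12\|b\|^2=\tfrac12\|a-b+c\|^2-\tfrac12\|c\|^2$), after which the line-search bound finishes exactly as in the paper. The only slip is calling the case $k=1$ trivial: $\mathcal{L}_1\ge\frac{\lambda_1(1-\sigma^2)}{2\sigma^2}\|e_1\|^2$ still needs the line-search step, but the same computation covers it because the monotonicity term has coefficient $(1-\tau_1)/\tau_1=0$.
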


\begin{proof}
Noting $ u_k = u_{k-1} - \rho \lambda_k d_k $ and employing direct computation, we obtain
\begin{equation}\label{eq:lk-difference}
\begin{aligned}
    \mathcal{L}_k - (1-\tau_k)&\mathcal{L}_{k-1} 
    = \langle u_{k-1} - y_k, d_k \rangle - (1-\tau_k) \langle u_{k-1} - y_{k-1}, d_{k-1} \rangle\\
    &\quad-\left(\rho\lambda_k+ \frac{1-2\rho}{2}\Lambda_k\right)\| d_k \|^2+\frac{1-\tau_k}{2}(1-2\rho)\Lambda_{k-1}\|d_{k-1}\|^2.
\end{aligned}
\end{equation}
By definition of $\{d_k\}_{k\ge0}$, the iteration of $\{y_k\}_{k\ge0}$ can be rewritten as
\begin{equation*}
    y_k = \tau_k u_{k-1} + ( 1 - \tau_k ) y_{k-1} - \lambda_k ( d_k + F(x_k) - F(y_k) ), \text{ for all } k \ge 1,
\end{equation*}
which is equivalent to
\[
\left\{
\begin{alignedat}{4}
u_{k-1} - {}& y_k
&= {}& \frac{1-\tau_k}{\tau_k}(y_k-y_{k-1})
+ \frac{\lambda_k}{\tau_k}\bigl(d_k + F(x_k)-F(y_k)\bigr),\\
u_{k-1} - {}& y_{k-1}
&= {}& \frac{1}{\tau_k}(y_k-y_{k-1})
+ \frac{\lambda_k}{\tau_k}\bigl(d_k + F(x_k)-F(y_k)\bigr).
\end{alignedat}
\right.
\]
Substituting the above identities into $\langle u_{k-1} - y_k , d_k \rangle$ and $ \langle  u_{k-1} - y_{k-1} , d_{k-1} \rangle $ respectively, we deduce that
\begin{equation}\label{eq:lk-main-part}
\begin{aligned}
    &\quad\langle u_{k-1} - y_{k}, d_k \rangle - ( 1 - \tau_k ) \langle u_{k-1} - y_{k-1}, d_{k-1} \rangle\\
    &=\frac{1-\tau_k}{\tau_k} \langle y_k - y_{k-1}, d_k - d_{k-1} \rangle + \frac{\lambda_k}{\tau_k}\langle d_k + F(x_k)-F(y_k), d_k -(1-\tau_k)d_{k-1} \rangle.
\end{aligned}
\end{equation}
Since $d_k \in (F+B)(y_k)$ for all $k \ge 1$, it follows from the monotonicity of \((F+B)(\cdot)\) that $\langle y_k-y_{k-1},d_k-d_{k-1}\rangle \ge 0$ for all $k\ge 2$. Combining $\tau_1 = 1$, we have
\begin{equation}\label{eq:monotonicity-gap}
   \frac{1-\tau_k}{\tau_k}\langle y_k-y_{k-1},d_k-d_{k-1}\rangle \ge 0 \text{ for all } k\ge 1. 
\end{equation}
On the other hand, by using the element relation $\langle a,b\rangle = \frac{1}{2}(\| a + b \|^2 - \| a \|^2 - \| b \|^2)$, we obtain
\begin{equation}\label{eq:polarization}
\begin{aligned}
    \langle F(x_k)-F(y_k),\, d_k -(1-\tau_k)d_{k-1} \rangle
    ={}& \frac{1}{2} \bigl(
    \| d_k - (1-\tau_k) d_{k-1} + F(x_k) - F(y_k) \|^2 \\
    & - \| d_k - (1-\tau_k) d_{k-1}\|^2
    - \| F(x_k)-F(y_k)\|^2
    \bigr).
\end{aligned}
\end{equation}
It follows from the nonexpansiveness of the projection operator and the stepsize condition~\eqref{eq:line-search} that
\begin{equation*}
\scalebox{0.99}{$
\begin{aligned}
\| d_k - (1-\tau_k) d_{k-1} +  F(x_k) -F(y_k) \| & = \lambda_k^{-1}\| \tau_k u_{k-1} + (1-\tau_k)(y_{k-1} - \lambda_k d_{ k - 1 }) - y_k \| \\
& \ge \lambda_k^{-1} \|x_k - y_k\| \ge  \sigma^{-1}\| F(x_k) - F(y_k) \|,
\end{aligned}
$}
\end{equation*}
which, together with~\eqref{eq:polarization}, implies
\begin{equation}\label{eq:descent-estimate}
\langle F(x_k) - F(y_k) , d_k - ( 1 - \tau_k ) d_{k-1} \rangle \ge \frac{1-\sigma^2}{2\sigma^2} \| F( x_k ) -F( y_k )\|^2 - \frac{1}{2} \| d_k - ( 1 - \tau_k ) d_{k-1} \|^2.
\end{equation}
By substituting~\eqref{eq:monotonicity-gap} and~\eqref{eq:descent-estimate} into~\eqref{eq:lk-main-part} and rearranging terms, we obtain that
\begin{equation}\label{eq:lk-descent}
\begin{aligned}
    & \quad \langle u_{k-1}-y_{k}, d_k \rangle-(1-\tau_k)\langle u_{k-1}-y_{k-1},d_{k-1} \rangle\\ 
    & \ge \frac{\lambda_k(1-\sigma^2)}{2\tau_k\sigma^2}\|F(x_k)-F(y_k)\|^2 + \frac{\lambda_k}{2\tau_k}(\|d_k\|^2-(1-\tau_k)^2\|d_{k-1}\|^2).
\end{aligned}
\end{equation}
Now we are ready to establish the lower bound of $ \mathcal{L}_k - ( 1 - \tau_k ) \mathcal{L}_{ k - 1 } $. Combining~\eqref{eq:lk-difference} and~\eqref{eq:lk-descent}, we arrive at
\begin{equation*}
\begin{aligned}
    \mathcal{L}_k - ( 1 - \tau_k ) \mathcal{L}_{ k - 1 } \ge & \frac{\lambda_k(1-\sigma^2)}{2\tau_k\sigma^2}\|F(x_k)-F(y_k)\|^2 + \left(\frac{\lambda_k}{2\tau_k} - \rho\lambda_k - \frac{1-2\rho}{2}\Lambda_k \right) \| d_k \|^2 \\
    & + \left( \frac{(1-\tau_k)(1-2\rho)}{2} \Lambda_{k-1} - \frac{\lambda_k(1-\tau_k)^2}{2\tau_k} \right) \| d_{k-1} \|^2.
\end{aligned}
\end{equation*}
Since $\tau_k=\frac{\lambda_k}{(1-2\rho)\Lambda_{k-1}+\lambda_k}$, we have
\begin{equation*}
\frac{\lambda_k}{2\tau_k} - \rho\lambda_k - \frac{1-2\rho}{2}\Lambda_k=0 \text{  and  } \frac{\lambda_k(1-\tau_k)^2}{2\tau_k} - \frac{(1-\tau_k)(1-2\rho)}{2} \Lambda_{k-1} = 0.
\end{equation*}
This directly yields the conclusion.
\end{proof}

Based on \Cref{prop:lk-recursion}, we can establish the $\mathcal{O}(1/\Lambda_k)$ iteration complexity of the MAEG method.  

\begin{theorem}\label{thm:maeg-properties}
Suppose that Assumptions~\ref{asmp:existence} and~\ref{asmp:line-search} hold. Let the sequences $\{x_k\}$, $\{y_k\}$, and $\{u_k\}$ be generated by \Cref{alg:maeg}, and let the function $\mathcal{L}_k$ be defined as in~\eqref{eq:lk}. Then, the following statements hold:
\begin{enumerate}[(i)]
    \item For all $ k \ge 1 $, it holds that $ \mathcal{L}_k \ge 0$.
    \item For any solution $x_\star \in \operatorname{Sol}(F,B)$, the sequence $\{\|u_k-x_\star\|\}_{k \ge 1}$ is non-increasing. Specifically, for all $k \geq 1$, we have
    \begin{equation*}
    \|u_{k-1} - x_\star\|^2 - \|u_k - x_\star\|^2 \geq \big(\rho^2\lambda_k^2 + \rho(1-2\rho)\lambda_k\Lambda_k\big)\|d_k\|^2 \geq 0.
    \end{equation*}
    \item For any solution $x_\star \in \operatorname{Sol}(F,B)$ and all $ k \ge 1 $, we have
    \begin{equation*}
    \| \mathcal{R}_{\mathrm{nat}}^c (y_k) \| \leq \|d_k\| \leq \frac{2\|u_k-x_\star\|}{(1-2\rho)\Lambda_k} \leq \frac{2\|x_0-x_\star\|}{(1-2\rho)\Lambda_k}.
    \end{equation*}
\end{enumerate}
\end{theorem}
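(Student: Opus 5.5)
For (i), I will run an induction on Proposition~\ref{prop:lk-recursion}. Since $\tau_1=1$ (because $\Lambda_0=0$), the recursion at $k=1$ gives $\mathcal{L}_1 \ge \frac{\lambda_1(1-\sigma^2)}{2\tau_1\sigma^2}\|F(x_1)-F(y_1)\|^2 \ge 0$, with no dependence on $\mathcal{L}_0$. For $k\ge 2$ we have $\tau_k\in(0,1]$, so $\mathcal{L}_k \ge (1-\tau_k)\mathcal{L}_{k-1} \ge 0$. This induction settles (i).

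For (ii), fix $x_\star\in\operatorname{Sol}(F,B)$ and expand the anchor update $u_k=u_{k-1}-\rho\lambda_k d_k$:
\begin{equation*}
\|u_{k-1}-x_\star\|^2-\|u_k-x_\star\|^2 = 2\rho\lambda_k\langle u_k-x_\star,d_k\rangle+\rho^2\lambda_k^2\|d_k\|^2 .
\end{equation*}
Writing $u_k-x_\star=(u_k-y_k)+(y_k-x_\star)$ splits the inner product into two parts.
\begin{itemize}
\item Since $d_k\in(F+B)(y_k)$ and $0\in(F+B)(x_\star)$, monotonicity gives $\langle y_k-x_\star,d_k\rangle\ge 0$.
\item By (i), $\langle u_k-y_k,d_k\rangle=\mathcal{L}_k+\frac12(1-2\rho)\Lambda_k\|d_k\|^2\ge\frac12(1-2\rho)\Lambda_k\|d_k\|^2$.
\end{itemize}
Substituting both bounds gives $\|u_{k-1}-x_\star\|^2-\|u_k-x_\star\|^2\ge(\rho^2\lambda_k^2+\rho(1-2\rho)\lambda_k\Lambda_k)\|d_k\|^2\ge0$. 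This is the claimed inequality, and it yields the monotonicity of $\{\|u_k-x_\star\|\}$. The case $\rho=0$ is trivial, since then $u_k=u_0$.

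For (iii), the first inequality is Proposition~\ref{prop:reduced-gradient-norm}. For the middle one, I combine $\mathcal{L}_k\ge0$ with monotonicity again:
\begin{equation*}
\tfrac12(1-2\rho)\Lambda_k\|d_k\|^2\le\langle u_k-y_k,d_k\rangle\le\langle u_k-y_k,d_k\rangle+\langle y_k-x_\star,d_k\rangle=\langle u_k-x_\star,d_k\rangle\le\|u_k-x_\star\|\|d_k\| .
\end{equation*}
Dividing by $\|d_k\|$ (the case $d_k=0$ is trivial) gives $\|d_k\|\le\frac{2\|u_k-x_\star\|}{(1-2\rho)\Lambda_k}$. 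The last inequality follows from (ii), since $\|u_k-x_\star\|\le\|u_0-x_\star\|=\|x_0-x_\star\|$.

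The only delicate point is the base case $k=1$ in (i): it works because $\tau_1=1$ eliminates $\mathcal{L}_0$. One should not argue from $\mathcal{L}_0=0$ via $d_0=0$, which is true but unnecessary. Everything else reduces to the single decomposition $u_k-x_\star=(u_k-y_k)+(y_k-x_\star)$.
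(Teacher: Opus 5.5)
Your proof is correct and follows the paper's argument essentially step for step. Part (i) is induction on Proposition~\ref{prop:lk-recursion}. For (ii) and (iii), the bound $\langle u_k-x_\star,d_k\rangle\ge\frac12(1-2\rho)\Lambda_k\|d_k\|^2$, obtained from $\mathcal{L}_k\ge0$ plus monotonicity, is fed into the anchor-update expansion and then into Cauchy--Schwarz. The only difference is the base case of (i): the paper notes $\mathcal{L}_0=0$, while you use $\tau_1=1$. The paper's route is equally valid, so there is no reason to avoid it.
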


\begin{proof}
To prove statement \textnormal{(i)}, note that
$u_0=y_0=x_0$, $d_0=0$, and $\Lambda_0=0$, hence $\mathcal{L}_0=0$.
By Proposition~\ref{prop:lk-recursion}, for every $k\ge1$, $\mathcal{L}_k\ge (1-\tau_k)\mathcal{L}_{k-1}$. Since $1-\tau_k\ge0$, induction yields $\mathcal{L}_k\ge0$ for all
$k\ge1$.

Next, we prove statement (ii). For any $x_\star \in \operatorname{Sol}(F,B)$ and all $k \ge 1$, substituting $u_{k-1} = u_k + \lambda_k\rho d_k$ yields
\begin{equation}\label{eq:u-distance-expansion}
\|u_{k-1}-x_\star\|^2 = \|u_k-x_\star + \lambda_k\rho d_k\|^2 = \|u_k-x_\star\|^2 + 2\lambda_k\rho\langle u_k-x_\star, d_k \rangle + \lambda_k^2\rho^2\|d_k\|^2.
\end{equation}
Using statement (i) and the monotonicity of $(F+B)$, we have
\begin{equation}\label{eq:lk-solution-bound}
0 \leq \mathcal{L}_k = \langle u_k-y_k, d_k\rangle - \frac{1}{2}(1-2\rho)\Lambda_k\|d_k\|^2 \leq \langle u_k - x_\star, d_k\rangle - \frac{1}{2}(1-2\rho)\Lambda_k\|d_k\|^2,
\end{equation}
which implies $\langle u_k - x_\star, d_k \rangle \ge \frac{1}{2} ( 1 - 2 \rho ) \Lambda_k \| d_k \|^2$. Substituting this back into~\eqref{eq:u-distance-expansion}, we obtain
\begin{equation*}
\|u_{k-1} - x_\star\|^2 - \|u_k - x_\star\|^2 \geq \big(\rho^2\lambda_k^2 + \rho(1-2\rho)\lambda_k\Lambda_k\big)\|d_k\|^2 \geq 0, \quad \forall k \geq 1,
\end{equation*}
which completes the proof of statement (ii).

Finally, from~\eqref{eq:lk-solution-bound}, we deduce that
\begin{equation}\label{eq:lk-cauchy}
\frac{1}{2}(1-2\rho)\Lambda_k \|d_k\|^2 \leq \langle u_k-x_\star, d_k\rangle \leq \|u_k-x_\star\|\|d_k\|.
\end{equation}
Combining~\eqref{eq:lk-cauchy} with statement (ii) and Proposition~\ref{prop:reduced-gradient-norm} yields statement (iii).
\end{proof}

Among the results established above, \Cref{thm:maeg-properties}(ii) is of central importance, particularly for our subsequent analysis. In the next section, we will leverage this monotonicity property to propose a restart strategy tailored to the MAEG method and establish the convergence for problems involving non-Lipschitz continuous operators. Furthermore, a direct corollary of~\Cref{thm:maeg-properties}(iii) is the improved complexity rate for cases where $F$ is globally Lipschitz continuous with constant $L$. Specifically, by fixing the stepsizes to $\lambda_k = \sigma/L$, the MAEG method achieves an $\mathcal{O}(1/k)$ iteration complexity with respect to both the composite natural residual and the reduced gradient norm. This improves upon the $\mathcal{O}(1/\sqrt{k})$ rate reported in~\cite[Theorem 5]{nesterov2023high}.

Next, we shall establish the convergence results of \Cref{alg:maeg}. To this end, we estimate the upper bound of $ \mathcal{L}_k $ first and then introduce some propositions for the cases of $\rho = 0$ and $0<\rho<\frac{1}{2}$, respectively.

\begin{lemma}\label{lem:lk-upper-bound}
Suppose that Assumptions~\ref{asmp:existence} and~\ref{asmp:line-search} hold. Let the sequences $\{x_k\}$, $\{y_k\}$, and $\{u_k\}$ be generated by Algorithm~\ref{alg:maeg}, and let $\mathcal{L}_k$ be defined as in~\eqref{eq:lk}. Then, for all $x_\star \in \operatorname{Sol}(F,B)$ and $k \ge 1$, the following upper bound holds:
\begin{equation*}
    \mathcal{L}_k \leq \frac{ \| u_k - x_\star \|^2 }{ 2 ( 1 - 2 \rho ) \Lambda_k} \leq \frac{ \| x_0 - x_\star \|^2 }{ 2 ( 1 - 2 \rho ) \Lambda_k}.
\end{equation*}
\end{lemma}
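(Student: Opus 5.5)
The plan is to repeat the first step of \eqref{eq:lk-solution-bound} and then bound the resulting quadratic in $d_k$ from above by its maximum. No recursion is needed; this is a pointwise bound at each $k\ge 1$.

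First, by \Cref{thm:maeg-properties}(i) and its proof, we have $d_k\in(F+B)(y_k)$ and $0\in(F+B)(x_\star)$. Monotonicity of $F+B$ therefore gives $\langle y_k - x_\star, d_k\rangle \ge 0$, hence $\langle u_k-y_k,d_k\rangle \le \langle u_k-x_\star,d_k\rangle$. It follows that
\begin{equation*}
\mathcal{L}_k \le \langle u_k-x_\star, d_k\rangle - \tfrac12(1-2\rho)\Lambda_k\|d_k\|^2 .
\end{equation*}

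Second, we maximize the right-hand side over the vector $d_k$. Set $c=(1-2\rho)\Lambda_k>0$; this is positive since $\rho<\tfrac12$ and $\lambda_1>0$. Young's inequality gives
\begin{equation*}
\langle a,d\rangle - \tfrac{c}{2}\|d\|^2 \le \frac{\|a\|^2}{2c}, \qquad a=u_k-x_\star .
\end{equation*}
Equivalently, one completes the square:
\begin{equation*}
\langle a,d\rangle - \tfrac{c}{2}\|d\|^2 = \frac{\|a\|^2}{2c} - \frac{c}{2}\Bigl\|d-\frac{a}{c}\Bigr\|^2 .
\end{equation*}
This yields the first inequality $\mathcal{L}_k\le \|u_k-x_\star\|^2/(2(1-2\rho)\Lambda_k)$.

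Third, the second inequality follows from \Cref{thm:maeg-properties}(ii). The sequence $\|u_k-x_\star\|$ is non-increasing in $k$, and $u_0=x_0$, so $\|u_k-x_\star\|\le\|x_0-x_\star\|$.

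There is no real obstacle here. The only points to check are that $\Lambda_k>0$ for $k\ge1$, so the division is legitimate, and that the monotonicity step uses the inclusion $d_k\in(F+B)(y_k)$ established before Proposition~\ref{prop:reduced-gradient-norm}.
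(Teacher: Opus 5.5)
Your proposal is correct and follows essentially the same route as the paper: bound $\mathcal{L}_k$ via monotonicity (the first inequality in \eqref{eq:lk-solution-bound}), maximize the resulting concave quadratic in $d_k$ to get $\|u_k-x_\star\|^2/(2(1-2\rho)\Lambda_k)$, and then use the non-increasing property of $\|u_k-x_\star\|$ from \Cref{thm:maeg-properties}(ii) together with $u_0=x_0$. One small citation fix: the inclusion $d_k\in(F+B)(y_k)$ is established in the discussion before Proposition~\ref{prop:reduced-gradient-norm}, not in \Cref{thm:maeg-properties}(i), as you yourself note at the end.
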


\begin{proof}
The conclusion follows immediately by maximizing the quadratic form on the right-hand side of inequality~\eqref{eq:lk-solution-bound} with respect to $d_k$.
\end{proof}

\begin{proposition}\label{prop:rho-zero-summable}
Suppose that Assumptions~\ref{asmp:existence} and~\ref{asmp:line-search} hold. Let the sequences $\{x_k\}$, $\{y_k\}$ and $\{u_k\}$ be generated by \Cref{alg:maeg} with $\rho = 0$, then the following statements hold:
\begin{enumerate}[(i)]
\item For all $x_\star \in \operatorname{Sol}(F,B)$, we have $    \sum_{k=1}^{+\infty} \Lambda_k^2\|F(x_k) - F(y_k)\|^2\leq \frac{\sigma^2}{1 - \sigma^2}\| x_0 - x_\star\| ^2$.
\item For all $x_\star \in \operatorname{Sol}(F,B)$ and $ k \ge 1 $, we have $\| y_k - x_0 \|^2 \leq \left( \frac{2\sigma^2}{1-\sigma^2} + 8 \right)\| x_0 - x_\star \|^2$.
\end{enumerate}
\end{proposition}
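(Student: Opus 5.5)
For part (i) I would telescope Proposition~\ref{prop:lk-recursion}. When $\rho=0$ the anchor never moves, so $u_k=x_0$ for all $k$, and $\tau_k=\lambda_k/\Lambda_k$. Hence $1-\tau_k=\Lambda_{k-1}/\Lambda_k$ and $\lambda_k/\tau_k=\Lambda_k$.

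Multiplying the recursion in Proposition~\ref{prop:lk-recursion} by $\Lambda_k$ gives
\begin{equation*}
\Lambda_k\mathcal{L}_k-\Lambda_{k-1}\mathcal{L}_{k-1}\ \ge\ \frac{1-\sigma^2}{2\sigma^2}\,\Lambda_k^2\|F(x_k)-F(y_k)\|^2 .
\end{equation*}
Summing from $1$ to $K$ and using $\Lambda_0\mathcal{L}_0=0$ gives
\begin{equation*}
\frac{1-\sigma^2}{2\sigma^2}\sum_{k=1}^K\Lambda_k^2\|e_k\|^2\le \Lambda_K\mathcal{L}_K,
\end{equation*}
where I write $e_k:=F(x_k)-F(y_k)$. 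Lemma~\ref{lem:lk-upper-bound} with $\rho=0$ bounds $\Lambda_K\mathcal{L}_K\le \tfrac12\|x_0-x_\star\|^2$. Letting $K\to\infty$ yields (i).

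For part (ii), the constant $\frac{2\sigma^2}{1-\sigma^2}+8$ suggests the splitting $(a+b)^2\le 2a^2+2b^2$ with $b=\Lambda_k\|d_k\|\le 2\|x_0-x_\star\|$, which is exactly Theorem~\ref{thm:maeg-properties}(iii) with $\rho=0$ and $u_k=x_0$. So I would set $w_k:=y_k-x_0+\Lambda_k d_k$, write $y_k-x_0=w_k-\Lambda_k d_k$, and aim to prove
\begin{equation*}
\|w_k\|^2\le \sum_{i\le k}\Lambda_i^2\|e_i\|^2\le \frac{\sigma^2}{1-\sigma^2}\|x_0-x_\star\|^2 .
\end{equation*}
The second inequality is part (i).

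To control $w_k$ I would use the rewritten $y$-update from the proof of Proposition~\ref{prop:lk-recursion}:
\begin{equation*}
\Lambda_k(y_k-x_0)=\Lambda_{k-1}(y_{k-1}-x_0)-\lambda_k\Lambda_k(d_k+e_k).
\end{equation*}
Equivalently,
\begin{equation*}
w_k+\Lambda_k e_k=\frac{\Lambda_{k-1}}{\lambda_k}(y_{k-1}-y_k).
\end{equation*}
I would then expand $\|w_k\|^2$ and look for a recursion $\|w_k\|^2\le\|w_{k-1}\|^2+\Lambda_k^2\|e_k\|^2$, or a weighted version of it. The cross terms should be absorbed using the monotonicity inequality $\langle y_k-y_{k-1},d_k-d_{k-1}\rangle\ge0$ and the nonnegativity of $\mathcal{L}_k$. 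An equivalent route is the identity
\begin{equation*}
\|x_0-y_k-\Lambda_kd_k\|^2=\|x_0-y_k\|^2-2\Lambda_k\mathcal{L}_k ,
\end{equation*}
combined with the telescoped inequality from part (i), which shows $\Lambda_k\mathcal{L}_k$ dominates the accumulated error terms.

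The main obstacle is this last step: obtaining a clean, non-circular bound on $\|w_k\|$ purely in terms of $\sum_i\Lambda_i^2\|e_i\|^2$. The difficulty is that the stepsizes $\lambda_k$ vary, so the weights in the recursion must be chosen carefully. If the direct recursion fails, my fallback is a distance-based recursion. Pair $\langle y_k-x_\star,d_k\rangle\ge0$ with the update to obtain
\begin{equation*}
\Lambda_k\|y_k-x_\star\|^2\le\Lambda_{k-1}\|y_{k-1}-x_\star\|^2+\lambda_k\|x_0-x_\star\|^2+2\lambda_k\Lambda_k\|y_k-x_\star\|\,\|e_k\| .
\end{equation*}
I would then control the last term with Cauchy--Schwarz against part (i), and pass to $\|y_k-x_0\|$ by the triangle inequality, possibly at the cost of a different constant.
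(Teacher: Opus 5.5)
Part (i) is correct and is exactly the paper's argument. Part (ii) has a genuine gap. The intermediate bound you aim for, $\|w_k\|^2\le\sum_{i\le k}\Lambda_i^2\|e_i\|^2$, is false, and so is any weighted variant that controls $w_k$ by the $e_i$ alone. Take $n=1$, $F\equiv0$, $B(x)=\{x\}$, $X=\mathbb{R}$, $x_0=1$ and $\lambda_k\equiv1$; these stepsizes are admissible for every $\sigma$. Then $y_1=d_1=\tfrac12$ and $y_2=J_B(\tfrac34)=\tfrac38=d_2$, so $w_2=\tfrac38-1+2\cdot\tfrac38=\tfrac18$ although every $e_i=0$. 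Your own identity shows why: when $e\equiv0$ it reads $w_k=\frac{\Lambda_{k-1}}{\lambda_k}(y_{k-1}-y_k)$, and nothing forces this to vanish.

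These iterates do not depend on $\sigma$. Hence the estimate $\|w_k\|^2\le\frac{\sigma^2}{1-\sigma^2}\|x_0-x_\star\|^2$, which your final split $y_k-x_0=w_k-\Lambda_kd_k$ needs, fails for small $\sigma$. So that route cannot produce the constant $\frac{2\sigma^2}{1-\sigma^2}+8$. The identity $\|w_k\|^2=\|y_k-x_0\|^2-2\Lambda_k\mathcal{L}_k$ is correct, but using it is circular, as you suspected.

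Your fallback, as sketched, does not close the gap either. Write $D=\|x_0-x_\star\|$ and $s^2=\frac{\sigma^2}{1-\sigma^2}$. The squared recursion, with the cross term handled by Cauchy--Schwarz, only gives $\|y_k-x_\star\|\le sD+\sqrt{1+s^2}\,D$. That yields the constant $(1+s+\sqrt{1+s^2})^2$, which exceeds $2s^2+8$ when $\sigma$ is close to $1$. This proves boundedness but not the stated inequality.

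The missing idea is to apply the $2a^2+2b^2$ split inside the recursion, not once at the end. Divide your rewritten update by $\Lambda_k$. This shows that $y_k-x_0=\frac{\Lambda_{k-1}}{\Lambda_k}(y_{k-1}-x_0)+\frac{\lambda_k}{\Lambda_k}\bigl(-\Lambda_k(d_k+e_k)\bigr)$ is a convex combination. Convexity of $\|\cdot\|^2$ together with $\Lambda_k\|d_k\|\le 2D$ then gives
\begin{equation*}
\Lambda_k\|y_k-x_0\|^2\le\Lambda_{k-1}\|y_{k-1}-x_0\|^2+\lambda_k\bigl(2\Lambda_k^2\|e_k\|^2+8D^2\bigr).
\end{equation*}
Telescope, divide by $\Lambda_k$, and use $\lambda_i\le\Lambda_k$ together with part (i); this gives the claim and is the paper's proof.

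Alternatively, your fallback works if you do not square. Nonexpansiveness of $J_{\lambda_k(F+B)}$ gives $\Lambda_k\|y_k-x_\star\|\le\Lambda_{k-1}\|y_{k-1}-x_\star\|+\lambda_kD+\lambda_k\Lambda_k\|e_k\|$. Summing yields $\|y_k-x_\star\|\le D+\max_i\Lambda_i\|e_i\|\le(1+s)D$. Hence $\|y_k-x_0\|^2\le(2+s)^2D^2\le(2s^2+8)D^2$, since $4s\le s^2+4$.
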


\begin{proof}
When $\rho = 0$, we have $u_k = x_0$ and $\tau_k = \frac{\lambda_k}{\Lambda_k}$ for all $k \ge 1$. It follows from \Cref{prop:lk-recursion} that for all $k \ge 1$,
\begin{equation}\label{eq:lk-rho-zero}
    \mathcal{L}_k-\frac{\Lambda_{k-1}}{\Lambda_k}\mathcal{L}_{k-1}\ge \frac{(1 - \sigma^2)\Lambda_k}{2\sigma^2} \| F(x_k) - F(y_k) \|^2.
\end{equation}
Multiplying both sides of~\eqref{eq:lk-rho-zero} by $\Lambda_k$ and summing over $k$ from $1$ to $+\infty$ , we obtain that for all $x_\star\in \operatorname{Sol}(F,B)$,
\begin{equation*}
    \sum_{k = 1}^{+\infty}\Lambda_k^2 \| F(x_k) - F(y_k) \|^2 \leq \frac{2\sigma^2}{1 - \sigma^2} \lim_{k \to +\infty}\Lambda_k\mathcal{L}_k\leq \frac{\sigma^2}{1-\sigma^2}\|x_0-x_\star\|^2,
\end{equation*}
where the last inequality is due to \Cref{lem:lk-upper-bound}.

Now, we shall prove the statement (ii).
For all $ k \ge 1 $, the expression for $ y_k $ can be reformulated as 
\begin{equation*}
    y_k=\frac{\lambda_k}{\Lambda_k}x_0+\frac{\Lambda_{k-1}}{\Lambda_k}y_{k-1}-\lambda_k (F(x_k) - F(y_k) + d_k).
\end{equation*}
By leveraging the convexity of $ \| \cdot \|^2 $, we derive
\begin{equation}\label{eq:y-bound-recursion}
\begin{aligned}
    \|y_k-x_0\|^2 & =\left\|\tfrac{\Lambda_{k-1}}{\Lambda_k}(y_{k-1}-x_0)-\lambda_k (F(x_k)-F(y_k) + d_k)\right\|^2\\
    & \leq \tfrac{\Lambda_{k-1}}{\Lambda_k}\|y_{k-1}-x_0\|^2+\tfrac{\lambda_k}{\Lambda_k}\|\Lambda_k (F(x_k) - F(y_k) + d_k)\|^2.
\end{aligned}
\end{equation}
Using Theorem~\ref{thm:maeg-properties}(iii), we obtain that for all \(x_\star\in \operatorname{Sol}(F,B)\),
\begin{equation*}
\begin{aligned}
    \|\Lambda_k (F(x_k) - F(y_k) + d_k)\|^2 & \leq 2\Lambda_k^2\| F(x_k) - F(y_k)\|^2+2\Lambda_k^2\| d_k \|^2 \\
    &\leq 2\Lambda_k^2 \|F(x_k) - F(y_k)\|^2 +8 \|x_0-x_\star\|^2,
\end{aligned}
\end{equation*}
which, when substituted back into~\eqref{eq:y-bound-recursion}, yields
\begin{equation*}
    \Lambda_k\|y_k-x_0\|^2-\Lambda_{k-1}\|y_{k-1}-x_0\|^2\leq 2\lambda_k \Lambda_k^2 \| F(x_k) - F(y_k) \|^2 + 8\lambda_k \|x_0 - x_\star\|^2.
\end{equation*}
By using the statement (i), it follows that
\begin{equation*}
\begin{aligned}
    \|y_k-x_0\|^2 & \leq \frac{1}{\Lambda_k} \sum_{i=1}^k\left( 2 \lambda_i \Lambda_i^2 \| F(x_i) - F(y_i) \|^2 + 8 \lambda_i \| x_0 - x_\star \|^2 \right) \\
    &\leq 2 \sum_{i=1}^k \Lambda_i^2\| F(x_i) - F(y_i) \|^2+8 \|x_0 - x_\star\|^2 \leq \left( \frac{2\sigma^2}{1-\sigma^2} + 8 \right)\| x_0 - x_\star \|^2,
\end{aligned}
\end{equation*}
which completes the proof.
\end{proof}

\begin{proposition}\label{prop:yu-gap}
Suppose that Assumptions~\ref{asmp:existence} and~\ref{asmp:line-search} hold. Let the sequences $\{x_k\}$, $\{y_k\}$ and $\{u_k\}$ be generated by \Cref{alg:maeg} with $ 0< \rho < \frac{1}{2} $, if $\lambda_k\in (\underline{\lambda},M \lambda_1)$ for some $\underline{\lambda} > 0$ and $ M > 1 $, then for all $x_\star \in \operatorname{Sol}(F,B)$, we have
\begin{equation*}
    \| y_k - u_k \|^2 \leq \left( \tfrac{ 1 - \rho }{ \rho } + \tfrac{ \sigma^2 M }{ ( 1 - \sigma^2 ) ( 1 - 2 \rho )^2 } \right) \| x_0 - x_\star \|^2
    \text{ and } \lim_{k\to+\infty}\|y_k-u_k\|=0.
\end{equation*}
\end{proposition}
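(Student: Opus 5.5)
The plan is to derive a one-step recursion for $s_k:=\|y_k-u_k\|^2$, in the same spirit as the proof of Proposition~\ref{prop:rho-zero-summable}(ii), and to control the forcing terms with summability estimates. Write $e_k:=F(x_k)-F(y_k)$ and $A_k:=(1-2\rho)\Lambda_{k-1}+\lambda_k=\lambda_k/\tau_k$. Combine the rewritten $y$-update $y_k=\tau_k u_{k-1}+(1-\tau_k)y_{k-1}-\lambda_k(d_k+e_k)$ with $u_k=u_{k-1}-\rho\lambda_k d_k$. This gives
\begin{equation*}
y_k-u_k=(1-\tau_k)(y_{k-1}-u_{k-1})-\lambda_k\bigl(e_k+(1-\rho)d_k\bigr).
\end{equation*}
Convexity of $\|\cdot\|^2$, applied with weights $1-\tau_k$ and $\tau_k$, then yields
\begin{equation*}
s_k\le(1-\tau_k)s_{k-1}+\tau_kA_k^2\|e_k+(1-\rho)d_k\|^2\le(1-\tau_k)s_{k-1}+2\lambda_kA_k\bigl(\|e_k\|^2+(1-\rho)^2\|d_k\|^2\bigr).
\end{equation*}

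Next I need two summability facts. For the $d$-part, $A_k\le\Lambda_k$, so $\lambda_kA_k\|d_k\|^2\le\lambda_k\Lambda_k\|d_k\|^2$. Telescoping Theorem~\ref{thm:maeg-properties}(ii) gives $\sum_k\rho(1-2\rho)\lambda_k\Lambda_k\|d_k\|^2\le\|x_0-x_\star\|^2$; this is where $\rho>0$ is essential, and it should produce the $(1-\rho)/\rho$-type constant. For the $e$-part, sum Proposition~\ref{prop:lk-recursion} in the form $\frac{1-\sigma^2}{2\sigma^2}A_k\|e_k\|^2\le\mathcal{L}_k-\mathcal{L}_{k-1}+\tau_k\mathcal{L}_{k-1}$. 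This gives
\begin{equation*}
\tfrac{1-\sigma^2}{2\sigma^2}\sum_{i\le k}A_i\|e_i\|^2\le\mathcal{L}_k+\sum_{i\le k}\tau_i\mathcal{L}_{i-1}.
\end{equation*}
Lemma~\ref{lem:lk-upper-bound} bounds the right-hand side by $\|x_0-x_\star\|^2/(2(1-2\rho))$ times $\Lambda_k^{-1}+\sum_i\tau_i/\Lambda_{i-1}$. Since $\tau_i\le\lambda_i/((1-2\rho)\Lambda_{i-1})$, $\lambda_i\le M\lambda_1$ and $\Lambda_{i-1}\ge(i-1)\underline{\lambda}$, the sum $\sum_i\tau_i/\Lambda_{i-1}$ converges. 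Multiplying by $\sup_i\lambda_i\le M\lambda_1$, I then get $\sum_k\lambda_kA_k\|e_k\|^2<\infty$ with a constant of the form $\sigma^2M\|x_0-x_\star\|^2/((1-\sigma^2)(1-2\rho)^2)$ up to normalization. I expect this to be the main obstacle: extracting a clean telescoped bound from $\sum\tau_i\mathcal{L}_{i-1}$ that gives exactly the stated constant. If the direct estimate is too lossy, I would instead multiply the recursion by suitable weights, as in the $\rho=0$ case, to absorb the $\tau_i\mathcal{L}_{i-1}$ terms.

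For the uniform bound, I unroll the recursion from $s_0=0$ and use $1-\tau_k\le1$. This gives $s_k\le\sum_{i\le k}2\lambda_iA_i(\|e_i\|^2+(1-\rho)^2\|d_i\|^2)$, which is bounded by the two sums above and should yield the stated constant once constants are tracked. For the limit, I apply Lemma~\ref{lem:xu} with $\alpha_k=\tau_k$, $\beta_k=0$ and $\gamma_k=2\lambda_kA_k(\|e_k\|^2+(1-\rho)^2\|d_k\|^2)$, which is summable by the above. Note that $\tau_1=1$; the lemma needs $\alpha_k\in(0,1)$, so I start the recursion at $k=2$. It remains to check $\prod_k(1-\tau_k)=0$, i.e.\ $\sum_k\tau_k=\infty$. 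This follows from $\tau_k\ge\underline{\lambda}/\bigl((1-2\rho)(k-1)M\lambda_1+M\lambda_1\bigr)\gtrsim1/k$. Hence $s_k\to0$, that is, $\|y_k-u_k\|\to0$.
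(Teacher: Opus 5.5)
Your recursion $y_k-u_k=(1-\tau_k)(y_{k-1}-u_{k-1})-\lambda_k(e_k+(1-\rho)d_k)$ is correct, and so are both summability estimates and your use of Lemma~\ref{lem:xu} from $k=2$ on. So you do prove that $\|y_k-u_k\|$ is bounded and tends to $0$. The Lemma~\ref{lem:xu} route to the limit is a legitimate alternative to the paper's argument, which shows that $\lim\mathcal{M}_k$ exists and $\sum_k\tau_k\|y_{k-1}-u_{k-1}\|^2<\infty$.

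The gap is the quantitative bound, which is part of the statement. Your expectation that the stated constant appears ``once constants are tracked'' is false. Write $C_1=\frac{1-\rho}{\rho}$ and $C_2=\frac{\sigma^2M}{(1-\sigma^2)(1-2\rho)^2}$. The splitting $\|e_k+(1-\rho)d_k\|^2\le2\|e_k\|^2+2(1-\rho)^2\|d_k\|^2$ doubles both contributions. Moreover, $A_k\le\Lambda_k$ with Theorem~\ref{thm:maeg-properties}(ii) gives $\frac{2(1-\rho)^2}{\rho(1-2\rho)}$ for the $d$-part, i.e.\ $C_1$ times $\frac{2(1-\rho)}{1-2\rho}>2$. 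Even the best version of your route only gives $(\sqrt{C_1}+\sqrt{C_2})^2\|x_0-x_\star\|^2$, which is strictly larger than $(C_1+C_2)\|x_0-x_\star\|^2$. That best version uses the sharper $\lambda_kA_k\le\frac{1}{\rho(1-\rho)}\bigl(\rho^2\lambda_k^2+\rho(1-2\rho)\lambda_k\Lambda_k\bigr)$ and Minkowski's inequality in weighted $\ell^2$ instead of the factor $2$. The convexity step treats $d_k$ as an error term to be paid for, and so discards the sign information the paper exploits.

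The missing idea is to keep $d_k$ in the main term and cancel it with a Lyapunov function. Apply $\|a+b\|^2\ge(1-\tau_k)\|a\|^2-\frac{1-\tau_k}{\tau_k}\|b\|^2$ to $(1-\tau_k)(y_{k-1}-u_{k-1})=a+b$, with $a=y_k-u_k+\lambda_k(1-\rho)d_k$ and only $b=\lambda_ke_k$. This charges $e_k$ with coefficient $\lambda_k^2/\tau_k=\lambda_kA_k$ and no factor $2$. Expanding $\|a\|^2$ produces the cross term $2\lambda_k(1-\rho)\langle y_k-u_k,d_k\rangle$. Now add $\frac{1-\rho}{\rho}\bigl(\|u_k-x_\star\|^2-\|u_{k-1}-x_\star\|^2\bigr)$, i.e.\ work with $\mathcal{M}_k=\|y_k-u_k\|^2+\frac{1-\rho}{\rho}\|u_k-x_\star\|^2$. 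All $d_k$-terms then collapse into $-2\lambda_k(1-\rho)\langle y_k-x_\star,d_k\rangle-\lambda_k^2(1-\rho)\|d_k\|^2\le0$ by monotonicity. Thus $\mathcal{M}_k-\mathcal{M}_{k-1}\le\lambda_kA_k\|e_k\|^2-\tau_k\|y_{k-1}-u_{k-1}\|^2$, and the only price for the $d$-terms is $\mathcal{M}_0=C_1\|x_0-x_\star\|^2$.

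Separately, your bound on $\sum_i\tau_i\mathcal{L}_{i-1}$ via $\Lambda_{i-1}\ge(i-1)\underline{\lambda}$ gives a constant depending on $\lambda_1/\underline{\lambda}$. Such a constant need not be uniform in the epoch index in Theorem~\ref{thm:maeg-r-convergence}, where $\underline{\lambda}_r\le\varepsilon_r\to0$. To get exactly $C_2$, use $\tau_i\le\frac{\lambda_i}{(1-2\rho)\Lambda_i}$ and the telescoping identity $\frac{\lambda_i}{\Lambda_{i-1}\Lambda_i}=\frac{1}{\Lambda_{i-1}}-\frac{1}{\Lambda_i}$. This yields $\mathcal{L}_k+\sum_{i\ge2}\tau_i\mathcal{L}_{i-1}\le\frac{\|x_0-x_\star\|^2}{2(1-2\rho)^2\lambda_1}$, and hence $\|y_k-u_k\|^2\le\mathcal{M}_k\le(C_1+C_2)\|x_0-x_\star\|^2$.
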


\begin{proof}
For any $x_\star \in \operatorname{Sol}(F,B)$, define the auxiliary function $\mathcal{M}_k$ as
\begin{equation*}
\mathcal{M}_k = \|y_k-u_k\|^2 + \frac{1-\rho}{\rho}\|u_k-x_\star\|^2.
\end{equation*}
Using the update rules for $\{y_k\}$ and $\{u_k\}$, we have
\begin{equation*}
y_k - u_k = (1-\tau_k)(y_{k-1}-u_{k-1}) - \lambda_k ( F(x_k) - F(y_k) + d_k ) + \rho\lambda_k d_k.
\end{equation*}
Rearranging terms implies
\begin{equation*}
(1-\tau_k)(y_{k-1}-u_{k-1}) = y_k - u_k + \lambda_k(1-\rho)d_k + \lambda_k(F(x_k)-F(y_k)).
\end{equation*}
Squaring both sides and applying the inequality $\|a+b\|^2 \ge (1-\tau_k)\|a\|^2 - \frac{1-\tau_k}{\tau_k}\|b\|^2$, we obtain
\begin{equation*}
(1-\tau_k)\|y_{k-1}-u_{k-1}\|^2 \ge \|y_k - u_k + \lambda_k(1-\rho)d_k\|^2 - \frac{\lambda_k^2}{\tau_k}\|F(x_k)-F(y_k)\|^2.
\end{equation*}
Expanding the norm squared term yields
\begin{equation*}
\begin{aligned}
\|y_k-u_k\|^2-\|y_{k-1}-u_{k-1}\|^2\leq &\frac{\lambda_k^2}{\tau_k}\|F(x_k)-F(y_k)\|^2-\tau_k\|y_{k-1}-u_{k-1}\|^2\\
& -\lambda_k^2(1-\rho)^2\|d_k\|^2 + 2\lambda_k(1-\rho)\langle u_k-y_k,d_k\rangle.
\end{aligned}
\end{equation*}
On the other hand, using $u_{k-1}=u_k+\rho\lambda_k d_k$, the squared distance to the solution satisfies
\begin{equation*}
\|u_k-x_\star\|^2-\|u_{k-1}-x_\star\|^2= -\rho^2\lambda_k^2\|d_k\|^2-2\rho\lambda_k\langle u_k-x_\star,d_k\rangle.
\end{equation*}
Substituting these estimates into the expression for $\mathcal{M}_k-\mathcal{M}_{k-1}$, we obtain
\begin{equation}\label{eq:mk-difference}
\begin{aligned}
\mathcal{M}_k - \mathcal{M}_{k-1}
&= \|y_k-u_k\|^2 - \|y_{k-1}-u_{k-1}\|^2 + \frac{1-\rho}{\rho}\left(\|u_k-x_\star\|^2 - \|u_{k-1}-x_\star\|^2\right) \\
&\leq \frac{\lambda_k^2}{\tau_k}\|F(x_k)-F(y_k)\|^2 - \tau_k\|y_{k-1}-u_{k-1}\|^2 - 2\lambda_k(1-\rho)\langle y_k-x_\star, d_k\rangle \\
&\quad - \lambda_k^2(1-\rho)\|d_k\|^2 \\
&\leq \frac{\lambda_k^2}{\tau_k}\|F(x_k)-F(y_k)\|^2 - \tau_k\|y_{k-1}-u_{k-1}\|^2,
\end{aligned}
\end{equation}
where the last inequality follows from the monotonicity property $\langle y_k - x_\star, d_k \rangle \ge 0$.
It follows from \Cref{prop:lk-recursion} and the bound $ \lambda_k \leq M \lambda_1$ that
\begin{equation*}
\begin{aligned}
\sum_{i=1}^{k}\frac{\lambda_i^2}{\tau_i}\|F(x_i)-F(y_i)\|^2 &\leq \sum_{i=1}^{k}\frac{2\sigma^2\lambda_i}{1-\sigma^2}\left(\mathcal{L}_i-(1-\tau_i)\mathcal{L}_{i-1}\right)\\
&\leq \frac{ 2 \sigma^2 M \lambda_1 }{ 1 - \sigma^2 } \left( \sum_{i=2}^{k} \tau_{i} \mathcal{L}_{i-1} + \mathcal{L}_k \right).
\end{aligned}
\end{equation*}
Using Lemma~\ref{lem:lk-upper-bound} and the condition $\tau_k \leq \frac{\lambda_k}{(1-2\rho)\Lambda_k}$, we bound the sum involving $\mathcal{L}$:
\begin{equation*}
\sum_{i=2}^{k} \tau_{i} \mathcal{L}_{i-1} + \mathcal{L}_k \leq \frac{ \| x_0 - x_\star \|^2 }{ 2 ( 1 - 2 \rho )^2} \left( \sum_{ i =2 }^{ k }\frac{ \lambda_{i} }{ \Lambda_{i-1} \Lambda_i } + \frac{1-2\rho}{\Lambda_k} \right) \leq \frac{ \| x_0 - x_\star \|^2 }{ 2 ( 1 - 2 \rho )^2 \lambda_1 }.
\end{equation*}
Therefore,
\begin{equation}\label{eq:mk-series-bound}
\sum_{ i = 1 }^{ k } \frac{ \lambda_i^2 }{ \tau_i } \| F(x_i) - F(y_i) \|^2 \leq \frac{ \sigma^2 M \| x_0 - x_\star \|^2}{ ( 1 - \sigma^2 ) ( 1 - 2 \rho )^2}.
\end{equation}
Summing~\eqref{eq:mk-difference} over $i=1,\dots,k$ and using~\eqref{eq:mk-series-bound} implies
\begin{equation*}
\begin{aligned}
    \| y_k - u_k \|^2 & \leq \mathcal{ M }_k \leq \mathcal{ M }_0 + \sum_{ i = 1 }^{ k } \frac{ \lambda_i^2 }{ \tau_i }\| F( x_i ) - F( y_i ) \|^2 \\
    & \leq \left( \frac{ 1 - \rho }{ \rho } + \frac{ \sigma^2 M }{ ( 1 - \sigma^2 ) ( 1 - 2 \rho )^2 } \right) \| x_0 - x_\star \|^2. 
\end{aligned}
\end{equation*}

Letting $k \to +\infty$ in~\eqref{eq:mk-series-bound}, we see that the sum converges. Consequently, from~\eqref{eq:mk-difference}, we deduce that $\lim_{k\to +\infty}\mathcal{M}_k$ exists. Since $\{\|u_k-x_\star\|\}$ converges (it is non-increasing and bounded below), it follows that $\lim_{k\to +\infty}\|y_k-u_k\|^2$ exists. Furthermore, rearranging~\eqref{eq:mk-difference}, we have
\begin{equation}\label{eq:tau-gap-summable}
\sum_{k=1}^{+\infty}\tau_k\|y_{k-1}-u_{k-1}\|^2 \leq \sum_{k=1}^{+\infty}(\mathcal{M}_{k-1} - \mathcal{M}_k) + \sum_{k=1}^{+\infty} \frac{\lambda_k^2}{\tau_k}\|F(x_k)-F(y_k)\|^2 < +\infty.
\end{equation}
Since $\tau_k \geq \frac{\lambda_k}{\Lambda_k} \geq \frac{\underline{\lambda}}{kM\lambda_1}$, we have $\sum_{k=1}^\infty \tau_k = +\infty$. If $\lim_{k\to+\infty}\|y_k-u_k\| \neq 0$, then there exists $\epsilon > 0$ such that $\|y_k-u_k\|^2 \ge \epsilon$ for sufficiently large $k$, which would imply $\sum \tau_k \|y_k-u_k\|^2 = +\infty$, contradicting~\eqref{eq:tau-gap-summable}. Thus, we conclude that $\lim_{k\to+\infty}\|y_k-u_k\|=0$.
\end{proof}

Now we are ready to establish the convergence results of \Cref{alg:maeg}. 
For $0 < \rho < \frac{1}{2}$, our approach is inspired by the proof of~\cite[Theorem 5]{yuan2024symplectic}.

\begin{theorem}\label{thm:maeg-convergence}
Suppose that Assumptions~\ref{asmp:existence} and~\ref{asmp:line-search} hold. Let the sequences $\{x_k\}$, $\{y_k\}$ and $\{u_k\}$ be generated by \Cref{alg:maeg}. If the stepsizes satisfy $\lambda_k\in (\underline{\lambda}, M \lambda_1)$ for some $\underline{ \lambda } > 0 $ and $M > 1$, then the following statements hold: 
\begin{enumerate}[(i)]
    \item If $\rho = 0$, then the sequence $\{y_k\}$ converges to $\Pi_{\operatorname{Sol}(F,B)}(x_0)$.
    \item If $0<\rho<\frac{1}{2}$, then both the sequences  $\{y_k\}$ and $\{u_k\}$ converge to the same point in $\operatorname{Sol}(F,B)$.
\end{enumerate}
\end{theorem}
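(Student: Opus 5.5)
For both parts I would show that every cluster point of $\{y_k\}$ lies in $\operatorname{Sol}(F,B)$, and then identify the limit: by a Halpern-type argument in case (i), and by an Opial-type argument in case (ii).

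\textbf{Common first step.} Since $\lambda_k>\underline{\lambda}$, we have $\Lambda_k\ge k\underline{\lambda}\to\infty$, and Theorem~\ref{thm:maeg-properties}(iii) gives $\|d_k\|\to0$. Boundedness of $\{y_k\}$ follows from Proposition~\ref{prop:rho-zero-summable}(ii) when $\rho=0$. When $\rho>0$ it follows from Proposition~\ref{prop:yu-gap} together with the boundedness of $\{u_k\}$ given by Theorem~\ref{thm:maeg-properties}(ii). Since $d_k\in(F+B)(y_k)$ and $F+B$ is maximal monotone, its graph is closed. Hence every cluster point $\bar y$ of $\{y_k\}$ satisfies $0\in(F+B)(\bar y)$.

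\textbf{Case (ii), $0<\rho<\frac12$.} For every $x_\star\in\operatorname{Sol}(F,B)$, $\|u_k-x_\star\|$ is non-increasing, so it converges. By Proposition~\ref{prop:yu-gap}, $\|y_k-u_k\|\to0$, so $\{u_k\}$ and $\{y_k\}$ have the same cluster points, and all of them are solutions. Opial's argument then applies in finite dimensions: take two cluster points $\bar u_1,\bar u_2$ of $\{u_k\}$; both are solutions, so $\|u_k-\bar u_1\|$ and $\|u_k-\bar u_2\|$ have limits. Evaluating these limits along the two subsequences forces $\bar u_1=\bar u_2$. Thus $u_k$ converges to some $x_\star\in\operatorname{Sol}(F,B)$, and so does $y_k$.

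\textbf{Case (i), $\rho=0$.} Here $u_k\equiv x_0$ and $\tau_k=\lambda_k/\Lambda_k$. Set $p=\Pi_{\operatorname{Sol}(F,B)}(x_0)$. The idea is to exploit $\mathcal{L}_k\ge0$ with a solution: from \eqref{eq:lk-solution-bound},
\[
\langle x_0-y_k,d_k\rangle\ge\tfrac12\Lambda_k\|d_k\|^2\ge0 .
\]
Monotonicity with $p$ gives $\langle y_k-p,d_k\rangle\ge0$. Now test against $\Lambda_kd_k$. The bound $\Lambda_k\|d_k\|\le2\|x_0-p\|$ shows that $\{\Lambda_kd_k\}$ is bounded. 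Pass to a subsequence along which $y_k\to\bar y$ and $\Lambda_kd_k\to w$. Then:
\begin{itemize}
\item from $\langle x_0-y_k,\Lambda_kd_k\rangle\ge\frac12\|\Lambda_kd_k\|^2$ we get $\langle x_0-\bar y,w\rangle\ge\frac12\|w\|^2$;
\item from $\langle y_k-p,\Lambda_kd_k\rangle\ge0$ we get $\langle\bar y-p,w\rangle\ge0$.
\end{itemize}
This alone does not pin down $\bar y=p$. A sharper relation is needed: I would show that $w$ is essentially $x_0-\bar y$, i.e.\ that the iteration is asymptotically a Halpern/Tikhonov scheme $y_k\approx\frac{\lambda_k}{\Lambda_k}x_0+\frac{\Lambda_{k-1}}{\Lambda_k}y_{k-1}-\lambda_kd_k$. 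Writing $z_k=\Lambda_kd_k$, this reads
\[
\Lambda_k(y_k-y_{k-1})=\lambda_k(x_0-y_{k-1})-\lambda_k z_k-\lambda_k\Lambda_k(F(x_k)-F(y_k)),
\]
and the error terms are square-summable in a weighted sense by Proposition~\ref{prop:rho-zero-summable}(i).

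The cleanest route is probably a Lemma~\ref{lem:xu}-type recursion for $s_k=\|y_k-p\|^2$. Expanding $y_k-p=\frac{\Lambda_{k-1}}{\Lambda_k}(y_{k-1}-p)+\frac{\lambda_k}{\Lambda_k}\bigl(x_0-p-\Lambda_kd_k-\Lambda_k e_k\bigr)$ with $e_k=F(x_k)-F(y_k)$ yields
\[
s_k\le(1-\tau_k)s_{k-1}+\tau_k\beta_k+\gamma_k ,
\]
where $\beta_k$ contains $2\langle x_0-p,y_k-p\rangle$ minus $2\langle \Lambda_kd_k,y_k-p\rangle\le0$. By the variational characterization of the projection, $\limsup\langle x_0-p,y_k-p\rangle\le0$ once all cluster points are solutions. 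The term $\gamma_k$ collects the $e_k$ contributions. Since $\tau_k\ge\underline{\lambda}/(kM\lambda_1)$, we get $\prod(1-\tau_k)=0$, and Lemma~\ref{lem:xu} gives $s_k\to0$.

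\textbf{Main obstacle.} The delicate point is controlling the $e_k$ cross term $\tau_k\langle\Lambda_ke_k,y_k-p\rangle$. Its weight $\tau_k\Lambda_k\|e_k\|\approx\lambda_k\|e_k\|$ must be shown summable, or absorbable into $\beta_k$. I would first try to put it into $\beta_k$, using $\Lambda_k\|e_k\|\to0$, which follows from the summability $\sum\Lambda_k^2\|e_k\|^2<\infty$ in Proposition~\ref{prop:rho-zero-summable}(i), together with boundedness of $\{y_k\}$.
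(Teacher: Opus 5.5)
Your proposal is correct. Part (ii) is essentially the paper's argument. The paper skips the two-cluster-point Opial step: it takes one cluster point $\bar x$ of $\{y_k\}$, shows it is a solution, and uses that $\|u_k-\bar x\|$ is non-increasing, so its limit equals the subsequential limit $0$. For showing cluster points are solutions, you use closedness of $\operatorname{gph}(F+B)$ with $d_k\to0$; the paper instead uses continuity of $\mathcal{R}^c_{\mathrm{nat}}$ via Lemma~\ref{lem:resolvent-continuity}(iii). Both work.

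In part (i) you and the paper both feed a Halpern-type recursion into Lemma~\ref{lem:xu}, but you derive it differently.

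\textbf{The paper's derivation.} It writes $y_k=J_{\lambda_k(F+B)}\bigl(\tau_k x_0+(1-\tau_k)y_{k-1}-\lambda_k e_k\bigr)$ and removes $d_k$ by nonexpansiveness of this resolvent. It then separates the error with the weighted inequality $\|a+b\|^2\le\frac{1}{1-\tau_k^2}\|a\|^2+\frac{1}{\tau_k^2}\|b\|^2$. This puts the error into a summable $\gamma_k=\Lambda_k^2\|e_k\|^2$. The price is an extra term $\frac{\tau_k}{1-\tau_k^2}\|x_0-x_\star\|^2$ in $\beta_k$, so the paper needs $\tau_k\to0$, which it gets from $\lambda_k\le M\lambda_1$.

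\textbf{Your derivation.} You use $\|a+b\|^2\le\|a\|^2+2\langle b,a+b\rangle$ with $a=(1-\tau_k)(y_{k-1}-p)$. You remove the $d_k$ term directly by $\langle d_k,y_k-p\rangle\ge0$, and put the error $2\Lambda_k\|e_k\|\,\|y_k-p\|$ into $\beta_k$. That term tends to $0$: $\Lambda_k\|e_k\|\to0$ by Proposition~\ref{prop:rho-zero-summable}(i), and $\{y_k\}$ is bounded by Proposition~\ref{prop:rho-zero-summable}(ii). So in your version $\gamma_k=0$, and your remark that ``$\gamma_k$ collects the $e_k$ contributions'' is unnecessary. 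The absorption into $\beta_k$ you describe under ``main obstacle'' does go through.

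\textbf{What each route buys.} Yours is slightly more elementary: it never needs the resolvent of $F+B$. It is also slightly more general for part (i): it uses only $\Lambda_k\to\infty$ (for $\prod(1-\tau_k)=0$ and for cluster points being solutions), not the upper bound $\lambda_k\le M\lambda_1$. The paper's route keeps the perturbation in the standard summable slot of Xu's lemma, at the cost of requiring $\tau_k\to0$.
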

\begin{proof}
To prove the statement (i), note that when $\rho = 0$, $u_k=x_0$ and $\tau_k = \frac{\lambda_k}{\Lambda_k}$ for all $k\ge 1$. From the definition of $d_k$ in~\eqref{eq:d-update}, we have
\begin{equation*}
    y_k=\tau_kx_0+(1-\tau_k)y_{k-1}-\lambda_k(F(x_k)-F(y_k))-\lambda_k d_k.
\end{equation*}
Since $d_k \in (F+B)(y_k)$ and $F+B$ is maximal monotone by \cite[Proposition A.1]{monteiro2011complexity}, we can equivalently write
\begin{equation*}
    y_k
    = J_{\lambda_k(F+B)}\!\left(\tau_k x_0 + (1-\tau_k)y_{k-1} - \lambda_k\bigl(F(x_k)-F(y_k)\bigr)\right).
\end{equation*}
Let $x_\star=\Pi_{\operatorname{Sol}(F,B)}(x_0)$. Since $x_\star \in \operatorname{Sol}(F,B)$, we have $0\in (F+B)(x_\star)$, which implies $x_\star = J_{\lambda_k( F + B )}(x_\star)$. Using the nonexpansiveness of the resolvent, it follows that 
\begin{equation}\label{eq:resolvent-nonexpansive}
\begin{aligned}
    \|y_k-x_\star\| & \leq \| \tau_k x_0 + ( 1 - \tau_k ) y_{ k-1 } - \lambda_k ( F( x_k ) - F( y_k ) ) - x_\star\| \\
    & = \| \tau_k ( x_0 - x_\star ) + ( 1 - \tau_k ) ( y_{ k - 1 } - x_\star) -\lambda_k ( F( x_k ) - F(y_k) )\|.
\end{aligned}
\end{equation}
When $k \ge 2$, we have $0<\tau_k<1$. By squaring both sides of inequality~\eqref{eq:resolvent-nonexpansive} and applying the Cauchy-Schwarz inequality, we obtain that for all $k \ge 2$,
\begin{equation}\label{eq:xu-recursion}
\scalebox{0.97}{$
\begin{aligned}
\|y_k-x_\star\|^2
&\le \frac{1}{1-\tau_k^2}
\bigl\|\tau_k(x_0-x_\star)+(1-\tau_k)(y_{k-1}-x_\star)\bigr\|^2
+ \frac{\lambda_k^2}{\tau_k^2}\|F(x_k)-F(y_k)\|^2 \\
&\le (1-\tau_k)\|y_{k-1}-x_\star\|^2
+ \tau_k\frac{\tau_k}{1-\tau_k^2}\|x_0-x_\star\|^2
+ \frac{2\tau_k}{1+\tau_k}\langle x_0-x_\star, y_{k-1}-x_\star\rangle \\
&\quad + \Lambda_k^2\|F(x_k)-F(y_k)\|^2 .
\end{aligned}
$}
\end{equation}
It follows from \Cref{prop:rho-zero-summable} (i) that
\begin{equation}\label{eq:xu-summable}
    \sum_{k=2}^{+\infty}\Lambda_k^2\| F(x_k) - F(y_k) \|^2 \leq \sum_{k=1}^{+\infty}\Lambda_k^2\| F(x_k) - F(y_k) \|^2 \leq \frac{\sigma^2}{1 - \sigma^2}\|x_0-x_\star\|^2 < +\infty. 
\end{equation}
We also have $ \prod_{k = 2}^{+\infty}(1-\tau_k) = \lim_{k\to +\infty}\frac{\lambda_1}{\Lambda_k}=0 $.
To apply \Cref{lem:xu}, we only need to demonstrate that 
\begin{equation}\label{eq:xu-limsup}
\limsup_{k \to +\infty} \left( \tfrac{\tau_k}{1-\tau_k^2}\|x_0-x_\star\|^2 + \tfrac{2}{1+\tau_k} \langle x_0 - x_\star, y_{k-1} - x_\star \rangle \right ) \leq 0.
\end{equation}

Let $\mathscr{Y}$ denote the set of accumulation points of the sequence $\{y_k\}$. By \Cref{prop:rho-zero-summable}(ii), the sequence $\{y_k\}$ is bounded, and thus the set $\mathscr{Y}$ is nonempty. For all $y_\star \in \mathscr{Y}$,  there exists a subsequence $\left\{y_{k_j}\right\}$ which converges to $y_\star$. According to \Cref{lem:resolvent-continuity}(iii) and \Cref{thm:maeg-properties}(iii), we can obtain that 
\begin{equation}\label{eq:accumulation-point}
    \| \mathcal{R}_{\mathrm{nat}}^c (y_\star) \| = \lim_{j \to +\infty} \| \mathcal{R}_{\mathrm{nat}}^c ( y_{k_j} ) \| \leq \lim_{j \to +\infty} \frac{2 \|x_0-x_\star\|}{\Lambda_{k_j}} = 0,
\end{equation}
It follows that $ \| \mathcal{R}_{\mathrm{nat}}^c (y_\star)  \| = 0 $, which implies $ y_\star \in \operatorname{Sol}(F,B) $. Due to the property of the projection operator, we have $\langle x_0 - x_\star , y_\star - x_\star  \rangle \leq 0$. Therefore, we obtain that
\begin{equation*}
    \limsup_{ k \to +\infty} \langle x_0 - x_\star, y_{k-1} - x_\star  \rangle \leq \sup_{y_\star \in \mathscr{Y}} \langle x_0 - x_\star , y_\star - x_\star\rangle \leq 0,
\end{equation*}
which, combining with $\tau_k\to 0$, implies~\eqref{eq:xu-limsup} holds. Now we can obtain that $ y_k \to x_\star $ as $k \to +\infty$ using \Cref{lem:xu}.

Now, we begin to prove the statement (ii) where $0<\rho<\frac{1}{2}$. By \Cref{thm:maeg-properties}(ii), we have that the sequence $\{u_k\}$ is bounded. From \Cref{prop:yu-gap}, we know $ \| u_k - y_k \| \to 0$ as $k \to + \infty $, which implies the sequence $\{y_k\}$ is also bounded. Take a convergent subsequence $\{y_{k_j}\}$ and let $y_{k_j}\to \bar{x}$. Similar to~\eqref{eq:accumulation-point}, we can obtain $\bar{x}\in\operatorname{Sol}(F,B)$. On the other hand, since $\{\|u_k-\bar{x}\|\}$ is non-increasing, we have
\begin{equation*}
    \lim_{k\to +\infty}\|u_k-\bar{x}\|=\lim_{j\to +\infty}\|u_{k_j}-\bar{x}\|\leq \lim_{j\to +\infty}\|u_{k_j}-y_{k_j}\| + \lim_{j\to +\infty}\|y_{k_j}-\bar{x}\| =0,
\end{equation*}
which implies $u_k\to \bar{x}$. Using \Cref{prop:yu-gap} again, we have $y_k\to\bar{x}$. Therefore,  both the sequence  $\{y_k\}$ and $\{u_k\}$ converge to the same point $ \bar{x} \in \operatorname{Sol}(F,B)$.
\end{proof}

\begin{remark}
When the operator $ F(\cdot) $ is Lipschitz continuous with constant $L$ and the stepsizes are set to $\lambda_k = \frac{\sigma}{L}$ for all $ k \ge 1 $, if $ 0 < \rho < \frac{1}{2} $, then $\{u_k\}$ converges to some $\bar{x}\in\operatorname{Sol}(F,B)$. According to Theorem~\ref{thm:maeg-properties}(iii), we can obtain that for all $ k \ge 1 $,
\begin{equation*}
\| \mathcal{R}_{\mathrm{nat}}^c( y_k ) \| \leq \|d_k\| \leq \frac{ 2 L \|u_k - \bar{x} \| }{( 1 - 2 \rho) \sigma k},
\end{equation*}
which, together with the fact that $\lim_{k \to +\infty} \| u_k - \bar{x} \| = 0 $, implies the $o( 1 / k )$ asymptotic convergence rate.
\end{remark}

\section{Restart Strategy for Non-Lipschitz Continuous Monotone Inclusions}\label{sec:maeg-r}
In this section, we introduce a restart strategy specifically designed for the MAEG method. The resulting algorithm, termed MAEG-R, converges to a solution of~\eqref{eq:mi} without requiring any Lipschitz assumption. Moreover, when $F(\cdot)$ is Lipschitz continuous, MAEG-R preserves the corresponding complexity guarantees of MAEG after finitely many restarts. The details of the algorithm are presented in~\Cref{alg:maeg-r}.
\begin{algorithm}[!htbp]
	\caption{Moving-Anchored Extra-Gradient Method with Restarts (MAEG-R)}
	\label{alg:maeg-r}
	\begin{algorithmic}[1]
    \STATE \textbf{Input:} $x_0\in\operatorname{cl}(\operatorname{dom} B)$, a nonempty, closed, convex set $X$ satisfying $\operatorname{cl}(\operatorname{dom} B)\subseteq X\subseteq \operatorname{dom} F$, a positive sequence $\{\varepsilon_r\}$ with $\varepsilon_r\to0$, constants $0<\underline{\alpha}_1\le \overline{\alpha}_1$, and parameters $\rho\in(0,\frac{1}{2})$, $\beta,\sigma,m\in(0,1)$, $\kappa\in [0,2]$, $M>1$.

    \STATE \textbf{Initialization:} Set $u_{1,0}=x_{1,0}=y_{1,0}=x_0$, $d_{1,0}=0$, $\Lambda_{r,0}=0$, $(\underline{\lambda}_r,\overline{\lambda}_r)=(0,+\infty)$ for all $r\in\mathbb{N}$, and $r=k=1$.

    \STATE \textbf{Step 1 (Termination Check):} If $\|y_{r,k-1}-J_B(y_{r,k-1}-F(y_{r,k-1}))\|=0$, terminate the algorithm.

    \STATE \textbf{Step 2 (MAEG Step):} Choose a trial stepsize $\alpha_{r,k}>0$. If $k=1$, require $\underline{\alpha}_1\le \alpha_{r,1}\le \overline{\alpha}_1$; if $k\ge2$, require $\alpha_{r,k}\ge \lambda_{r,k-1}$. Set
    \[
        \lambda_{r,k}=\min\{\alpha_{r,k},\overline{\lambda}_r\}\beta^j,
    \]
    $j$ is the smallest non-negative integer such that
    \begin{equation}\label{eq:maeg-r-linesearch-test}
        \lambda_{r,k}<\underline{\lambda}_r
        \quad\text{or}\quad
        \lambda_{r,k}\|F(x_{r,k})-F(y_{r,k})\|\le \sigma \|x_{r,k}-y_{r,k}\|,
    \end{equation}
    where $(x_{r,k},y_{r,k})$ in~\eqref{eq:maeg-r-linesearch-test} is the trial pair generated by~\eqref{eq:x-update}--\eqref{eq:y-update}, with $(y_{k-1},u_{k-1},d_{k-1},\lambda_k,\Lambda_{k-1})$ replaced by $(y_{r,k-1},u_{r,k-1},d_{r,k-1},\lambda_{r,k},\Lambda_{r,k-1})$. Then compute $(x_{r,k},y_{r,k},d_{r,k},u_{r,k},\Lambda_{r,k})$ by applying~\eqref{eq:x-update}--\eqref{eq:u-update} under the same replacements.

    \STATE \textbf{Step 3 (Stepsize Bounds):} If $k=1$, set
    \[
        (\underline{\lambda}_r,\overline{\lambda}_r)=\bigl(\min\{m\lambda_{r,1},\varepsilon_r\},\,M\lambda_{r,1}\bigr).
    \]

    \STATE \textbf{Step 4 (Restart Check):} If $\lambda_{r,k}<\underline{\lambda}_r$, reject the obtained iterate $(x_{r,k},y_{r,k},d_{r,k},u_{r,k})$ and compute
    \begin{equation}\label{eq:restart-u}
    u_{r+1,0}=\Pi_{\operatorname{cl}(\operatorname{dom} B)}\left(u_{r,k-1}-\kappa \frac{\langle u_{r,k-1}-y_{r,k-1},d_{r,k-1}\rangle}{\|d_{r,k-1}\|^2}d_{r,k-1}\right),
    \end{equation}
    set $x_{r+1,0}=y_{r+1,0}=u_{r+1,0}$, $d_{r+1,0}=0$, update $(r,k)\gets (r+1,1)$, and return to \textbf{Step 1}.

    \STATE \textbf{Step 5:} Set $k\gets k+1$ and return to \textbf{Step 1}.

    \STATE \textbf{Output:} Iteration sequence $\{(x_{r,k},y_{r,k},u_{r,k})\}$.
	\end{algorithmic}
\end{algorithm}

It remains to show that, for $k=1$ and any $r$, \textbf{Step 2} in Algorithm~\ref{alg:maeg-r} is well-defined. Rather than addressing this in isolation, we establish a stronger result that facilitates further analysis. To this end, we first recall the Tube Lemma from general topology.

\begin{lemma}[Tube Lemma, Lemma 26.8 in~\cite{munkres2000topology}]
\label{lem:tube-lemma}
Let $X$ and $Y$ be topological spaces, where $X$ is compact. Let $y_0 \in Y$, and let $O$ be an open set of $X \times Y$ containing the slice $X \times \{y_0\}$. Then there exists a neighborhood $V$ of $y_0$ in $Y$ such that $O$ contains the tube $X \times V$.
\end{lemma}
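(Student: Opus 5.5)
The Tube Lemma is a standard fact from general topology; the paper cites it from Munkres, so I only recall the usual argument. The plan is to cover the compact slice $X\times\{y_0\}$ by finitely many basic open rectangles contained in $O$, and then intersect their $Y$-factors.

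First, fix $x\in X$. Since $(x,y_0)\in O$ and $O$ is open in the product topology, there are open sets $U_x\subseteq X$ and $V_x\subseteq Y$ with $(x,y_0)\in U_x\times V_x\subseteq O$. The family $\{U_x\}_{x\in X}$ is an open cover of $X$. By compactness, finitely many of them, say $U_{x_1},\dots,U_{x_n}$, already cover $X$.

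Next, set $V=V_{x_1}\cap\cdots\cap V_{x_n}$. As a finite intersection of open sets each containing $y_0$, this is an open neighborhood of $y_0$. Now take any $(x,y)\in X\times V$. Then $x$ lies in some $U_{x_i}$, and $y\in V\subseteq V_{x_i}$. Hence $(x,y)\in U_{x_i}\times V_{x_i}\subseteq O$, and so $X\times V\subseteq O$.

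There is essentially no obstacle. The only point to watch is the finiteness of the subcover: it guarantees that the intersection $V$ is open, and this is exactly where the compactness of $X$ is used. If $X$ is empty, the statement holds trivially with $V=Y$.
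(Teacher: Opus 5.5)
Your proof is correct. It is the standard argument from Munkres (Lemma 26.8), which the paper cites without reproducing: take finitely many open rectangles in $O$ covering the slice, and intersect their $Y$-factors. Munkres phrases this as a finite subcover of the compact slice $X\times\{y_0\}$, while you take the finite subcover of $X$ itself; this is only a cosmetic difference.
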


For $k=1$, since $x_{r,1}=x_{r,0}$ and $y_{r,1}=J_{\lambda_{r,1}B}(x_{r,0}-\lambda_{r,1}F(x_{r,0}))$, the line-search step is well defined by the stronger result below.

\begin{proposition}\label{prop:stepsize-lower-bound}
Let $\sigma \in (0,1)$. Suppose $S \subseteq \operatorname{cl}(\operatorname{dom} B) \setminus \operatorname{Sol}(F,B)$ is a compact set and that $F(\cdot)$ is continuous on $\operatorname{cl}(\operatorname{dom} B)$. Then, there exists $\delta > 0$ such that
\begin{equation*}
    \lambda\|F(J_{\lambda B}(x-\lambda F(x)))-F(x)\|\leq \sigma \|J_{\lambda B}(x-\lambda F(x))-x\|, \quad \forall (x,\lambda)\in S\times (0,\delta].
\end{equation*}
\end{proposition}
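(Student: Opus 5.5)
The plan is to split the desired inequality into two parts. The right-hand side divided by $\lambda$ should be bounded below by a positive constant, uniformly on $S$. The quantity $\|F(J_{\lambda B}(x-\lambda F(x)))-F(x)\|$ should be made uniformly small as $\lambda\to0^+$. Throughout, write $z(x,\lambda)\coloneqq\varphi(x-\lambda F(x),\lambda)$, where $\varphi$ is the jointly continuous map of Lemma~\ref{lem:resolvent-continuity}(iii). Since $F$ is continuous on $\operatorname{cl}(\operatorname{dom} B)$, the map $(x,\lambda)\mapsto x-\lambda F(x)$ is continuous on $S\times\mathbb{R}_+$. Hence $z$ is continuous on $S\times\mathbb{R}_+$. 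Moreover $z(x,\lambda)\in\operatorname{cl}(\operatorname{dom} B)$ for every $\lambda\ge0$, and $z(x,0)=\Pi_{\operatorname{cl}(\operatorname{dom} B)}(x)=x$ because $S\subseteq\operatorname{cl}(\operatorname{dom} B)$.

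\emph{Lower bound on the right-hand side.} Fix any $\delta_0>0$. For fixed $x\in S$, apply Lemma~\ref{lem:nonincreasing-resolvent} with $d=-F(x)$: the function $\lambda\mapsto\|z(x,\lambda)-x\|/\lambda$ is non-increasing on $(0,\infty)$. Hence for all $\lambda\in(0,\delta_0]$,
\begin{equation*}
\frac{\|z(x,\lambda)-x\|}{\lambda}\ \ge\ \frac{\|z(x,\delta_0)-x\|}{\delta_0}=\|\mathcal{R}_{\mathrm{nat}}^c(x,\delta_0)\|.
\end{equation*}
The map $x\mapsto\|\mathcal{R}_{\mathrm{nat}}^c(x,\delta_0)\|$ is continuous on $S$. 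It is strictly positive there, since $\mathcal{R}_{\mathrm{nat}}^c(x,\delta_0)=0$ would force $x\in\operatorname{Sol}(F,B)$. As $S$ is compact, $c\coloneqq\min_{x\in S}\|\mathcal{R}_{\mathrm{nat}}^c(x,\delta_0)\|>0$. Thus $\|z(x,\lambda)-x\|\ge c\lambda$ on $S\times(0,\delta_0]$.

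\emph{Smallness of the $F$-difference via the Tube Lemma.} Define $g(x,\lambda)\coloneqq\|F(z(x,\lambda))-F(x)\|$ on $S\times[0,\delta_0]$. It is continuous as a composition of continuous maps, and $g(x,0)=\|F(x)-F(x)\|=0$. The set $O\coloneqq\{(x,\lambda)\in S\times[0,\delta_0]: g(x,\lambda)<\sigma c\}$ is therefore open in $S\times[0,\delta_0]$ and contains the slice $S\times\{0\}$. Applying Lemma~\ref{lem:tube-lemma} with the compact space $S$ and $Y=[0,\delta_0]$ gives a neighborhood of $0$, hence some $\delta\in(0,\delta_0]$, such that $g<\sigma c$ on $S\times[0,\delta]$. (Alternatively, this follows from uniform continuity of $g$ on the compact set $S\times[0,\delta_0]$.) Combining the two parts, for $(x,\lambda)\in S\times(0,\delta]$ we get $\lambda g(x,\lambda)\le\sigma c\lambda\le\sigma\|z(x,\lambda)-x\|$, which is the claim.

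The main subtlety is the lower bound. Naively, $\|z-x\|$ tends to $0$ as $\lambda\to0^+$, so one needs it to vanish no faster than linearly, uniformly in $x$. The monotonicity from Lemma~\ref{lem:nonincreasing-resolvent} is exactly what provides this, reducing the problem to positivity of the natural residual at the single fixed scale $\delta_0$ on the compact set $S$. A second point needing care is that $F$ must only be evaluated on $\operatorname{cl}(\operatorname{dom} B)$, where it is assumed continuous. This is why the continuous extension $\varphi$ is used at $\lambda=0$: it keeps $z$ inside $\operatorname{cl}(\operatorname{dom} B)$ and makes $g$ continuous up to $\lambda=0$.
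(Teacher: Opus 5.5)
Your proof is correct and follows the paper's argument essentially step for step: a uniform linear lower bound $\|J_{\lambda B}(x-\lambda F(x))-x\|\ge c\lambda$ from Lemma~\ref{lem:nonincreasing-resolvent} plus compactness of $S$, followed by the Tube Lemma applied to the continuous function $(x,\lambda)\mapsto\|F(\varphi(x-\lambda F(x),\lambda))-F(x)\|$, which vanishes on $S\times\{0\}$. The only cosmetic differences are that the paper fixes the reference scale $\lambda=1$ where you use an arbitrary $\delta_0$, and your remark that uniform continuity on $S\times[0,\delta_0]$ could replace the Tube Lemma.
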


\begin{proof}
Since $S \subseteq \operatorname{cl}(\operatorname{dom} B) \setminus \operatorname{Sol}(F,B)$ is compact and $F(\cdot)$ is continuous on $\operatorname{cl}(\operatorname{dom} B)$, the residual function is bounded away from zero on $S$. Specifically, there exists a constant $\delta_0 > 0$ such that, for all $x \in S$, $\|J_{B}(x-F(x))-x\| \geq \delta_0 > 0 $.
It follows from Lemma~\ref{lem:nonincreasing-resolvent} that the quotient $\frac{1}{\lambda}\|J_{\lambda B}(x-\lambda F(x))-x\|$ is non-increasing with respect to $\lambda$, which implies
\begin{equation}\label{eq:resolvent-lower-bound}
    \frac{1}{\lambda}\|J_{\lambda B}(x-\lambda F(x))-x\| \geq \|J_{B}(x-F(x))-x\| \geq \delta_0, \quad \forall (x,\lambda) \in S \times (0,1].
\end{equation}
On the other hand, Lemma~\ref{lem:resolvent-continuity}(iii) ensures that the function $\varphi(\cdot,\cdot)$, defined in~\eqref{eq:varphi}, is continuous on $S\times [0,1]$. We define a new continuous function $\Phi: \operatorname{cl}(\operatorname{dom}B) \times [0,1] \to \mathbb{R}$ by
\begin{equation*}
    \Phi(x,\lambda) = \|F(\varphi(x-\lambda F(x),\lambda))-F(x)\| = 
    \begin{cases}
        \|F(J_{\lambda B}(x-\lambda F(x))) - F(x)\|, & \text{if } \lambda > 0, \\
        0, & \text{if } \lambda = 0.
    \end{cases}
\end{equation*}
Let $O$ be the set defined by $O = \left\{(x,\lambda) \in \operatorname{cl}(\operatorname{dom}B) \times [0,1] \mid \Phi(x,\lambda) < \sigma \delta_0 \right\}$. Since $\Phi$ is continuous, $O$ is an open set in the subspace topology of $\operatorname{cl}(\operatorname{dom}B)\times [0,1]$. Furthermore, since $\Phi(x,0) = 0 < \sigma \delta_0$ for all $x \in S$, the slice $S \times \{0\}$ is contained in $O$. By the Tube Lemma (Lemma~\ref{lem:tube-lemma}), there exists $\delta \in (0,1)$ such that $S \times [0,\delta] \subseteq O$. This implies that
\begin{equation}\label{eq:forward-difference-bound}
    \|F(J_{\lambda B}(x-\lambda F(x))) - F(x)\| < \sigma \delta_0, \quad \forall (x,\lambda) \in S \times (0,\delta].
\end{equation}
Combining~\eqref{eq:resolvent-lower-bound} and~\eqref{eq:forward-difference-bound}, we obtain that
\begin{equation*}
    \lambda \|F(J_{\lambda B}(x-\lambda F(x)))-F(x)\| < \lambda \sigma \delta_0 \leq \sigma \|J_{\lambda B}(x-\lambda F(x))-x\|
\end{equation*}
holds for all $(x,\lambda) \in S \times (0,\delta]$. This completes the proof.
\end{proof}

Now, we are ready to present the convergence result for \Cref{alg:maeg-r}. Here, we no longer assume Assumption~\ref{asmp:line-search} holds, since we have proven it holds naturally when $k=1$, and when $k>1$, the well-definedness of Algorithm~\ref{alg:maeg-r} is ensured by the restart condition. To avoid ambiguity, throughout this section, the double-indexed iterates generated by Algorithm~\ref{alg:maeg-r} are understood in the order in which they are generated.

\begin{theorem}\label{thm:maeg-r-convergence}
Suppose Assumption~\ref{asmp:existence} holds. Let $\{(x_{r,k},y_{r,k},u_{r,k})\}$ be generated by Algorithm~\ref{alg:maeg-r}. Then $\{y_{r,k}\}$ and $\{u_{r,k}\}$ both converge to the same point in $\operatorname{Sol}(F,B)$.
\end{theorem}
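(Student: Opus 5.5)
The plan is a Fejér-monotonicity argument for the anchor sequence taken across all epochs, followed by a case split on the number of restarts. Throughout I fix $x_\star\in\operatorname{Sol}(F,B)$. If Step 1 terminates, $y_{r,k-1}$ is a solution and there is nothing to prove, so I assume the algorithm runs forever.

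\textbf{Step 1: Fejér monotonicity of $\|u-x_\star\|$ along the whole run.} Inside each epoch every accepted step satisfies the line-search condition~\eqref{eq:line-search}. So Proposition~\ref{prop:lk-recursion} and Theorem~\ref{thm:maeg-properties}(i)--(ii) apply epoch by epoch, and $\|u_{r,k}-x_\star\|$ is non-increasing in $k$.

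For the restart map~\eqref{eq:restart-u}, write $u=u_{r,k-1}$, $y=y_{r,k-1}$, $d=d_{r,k-1}$.
\begin{itemize}
\item Since $\mathcal L_{k-1}\ge0$, we get $\langle u-y,d\rangle\ge\tfrac12(1-2\rho)\Lambda\|d\|^2\ge0$.
\item Since $d\in(F+B)(y)$, monotonicity gives $\langle y-x_\star,d\rangle\ge0$.
\item Together these say $x_\star$ lies in the halfspace $H=\{z:\langle z-u,d\rangle\le-\langle u-y,d\rangle\}$.
\item The point $u-\kappa\frac{\langle u-y,d\rangle}{\|d\|^2}d$ is the $\kappa$-relaxed projection of $u$ onto $H$ with $\kappa\in[0,2]$, so it is no farther from $x_\star$ than $u$.
\item The outer $\Pi_{\operatorname{cl}(\operatorname{dom}B)}$ is nonexpansive and fixes $x_\star\in\operatorname{cl}(\operatorname{dom}B)$.
\end{itemize}
One edge case needs separate treatment: if the restart occurs at $k=1$, then $d=0$ and the formula is undefined. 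I will have to check whether this case actually arises and, if so, read the update as $u_{r+1,0}=u_{r,0}$.

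With this, $\|u-x_\star\|$ is non-increasing along the concatenated sequence. Hence all $u$'s stay bounded, and $\lim\|u-x_\star\|$ exists for every solution $x_\star$.

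\textbf{Step 2: finitely many restarts.} Let $\bar r$ be the last epoch. From then on no step falls below $\underline\lambda_{\bar r}>0$, and the cap $\overline\lambda_{\bar r}=M\lambda_{\bar r,1}$ keeps $\lambda_{\bar r,k}\le M\lambda_{\bar r,1}$. So within epoch $\bar r$ the method is exactly Algorithm~\ref{alg:maeg} started at $u_{\bar r,0}$, Assumption~\ref{asmp:line-search} holds, and the stepsizes lie in $[\underline\lambda_{\bar r},M\lambda_{\bar r,1}]$. The closed versus open interval makes no difference in Proposition~\ref{prop:yu-gap}, since only $\sum\tau_k=\infty$ and $\lambda_k\le M\lambda_1$ are used there. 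Theorem~\ref{thm:maeg-convergence}(ii) then gives that $y_{\bar r,k}$ and $u_{\bar r,k}$ converge to a common solution.

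\textbf{Step 3: infinitely many restarts (main obstacle).} By the standard Opial argument together with Step 1, it suffices to show two things:
\begin{itemize}
\item every cluster point of $\{u_{r,k}\}$ lies in $\operatorname{Sol}(F,B)$;
\item $\|y_{r,k}-u_{r,k}\|\to0$ along the run.
\end{itemize}

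For the gap, I would redo the $\mathcal M_k$ estimate of Proposition~\ref{prop:yu-gap} inside each epoch. Within an epoch $\lambda_k\le M\lambda_{r,1}$, so that estimate bounds $\|y_{r,k}-u_{r,k}\|$ uniformly in terms of $\|u_{r,0}-x_\star\|\le\|x_0-x_\star\|$, and the $y$'s are bounded too.

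For the cluster points, the restart test gives a fact to exploit. At the restart of epoch $r$, the trial stepsize $\lambda'=\lambda_{r,k}/\beta$ failed the test~\eqref{eq:maeg-r-linesearch-test}, so $\lambda'\|F(x')-F(y')\|>\sigma\|x'-y'\|$ for the trial pair $(x',y')$. Moreover $\lambda_{r,k}<\underline\lambda_r\le\varepsilon_r\to0$. The one exception is the first trial of the step, which is capped at $\overline\lambda_r$; there the cap would need to be handled separately.

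I would argue by contradiction. Suppose that along a subsequence of restarts the points $y_{r,k-1}$ stay in a compact set $S$ at positive distance from $\operatorname{Sol}(F,B)$. Then I want a uniform-in-$(u,y,d)$ analogue of Proposition~\ref{prop:stepsize-lower-bound}: the line search must succeed for all $\lambda\in(0,\delta]$. The ingredients are:
\begin{itemize}
\item the bounded data $(u,y,d,\tau)$ range over a compact set;
\item as $\lambda\to0$ we have $\tau\to0$, $x'\to y$ and $y'\to y$ by Lemma~\ref{lem:resolvent-continuity}(iii);
\item $\|x'-y'\|/\lambda$ stays bounded away from zero near non-solutions, by a monotonicity-of-quotient argument as in Lemma~\ref{lem:nonincreasing-resolvent};
\item the Tube Lemma then gives uniformity over the compact set.
\end{itemize}
This contradicts $\lambda'\to0$, so $\operatorname{dist}(y_{r,k-1},\operatorname{Sol})\to0$ along restarts.

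Combining this with the vanishing gap, and with Fejér monotonicity making $\|u_{r,0}-\bar x\|$ decrease toward the limit, shows that the cluster points of $u$ are solutions. Opial then yields convergence of $\{u_{r,k}\}$, and the vanishing gap yields convergence of $\{y_{r,k}\}$ to the same point.

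The uniform lower bound on accepted trial stepsizes away from $\operatorname{Sol}(F,B)$, for $k\ge2$ where the anchor term perturbs the inputs, is the step I expect to be hardest.
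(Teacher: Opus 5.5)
Your Fejér step (the restart map is a $\kappa$-relaxed projection onto $\{z:\langle z-y,d\rangle\le 0\}$ followed by $\Pi_{\operatorname{cl}(\operatorname{dom}B)}$) and your finite-restart case are correct and agree with the paper. Your $d=0$ edge case never occurs: during the $k=1$ search $(\underline{\lambda}_r,\overline{\lambda}_r)=(0,+\infty)$, and afterwards $\underline{\lambda}_r=\min\{m\lambda_{r,1},\varepsilon_r\}<\lambda_{r,1}$.

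The genuine gap is the infinite-restart case. The uniform line-search lemma you want for $k\ge 2$ is false, and its failure is exactly why restarts are needed. For $k\ge2$ the trial point $x'$ already contains the extrapolation $-(1-\tau)\lambda d$. So $\|x'-y'\|/\lambda$ is not a natural-residual quotient at $y$, and it is not bounded below near non-solutions. For example, with $B=0$ and $X=\mathbb{R}^n$ one has $d=F(y)$ and $\|x'-y'\|/\lambda=\|F(x')-(1-\tau)F(y)\|\to 0$ as $\lambda\to0$, for every fixed $(u,y)$ and $\Lambda>0$.

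Concretely, take the appendix operator $F(t)=1+\operatorname{sign}(t)\sqrt{|t|}$ on $\mathbb{R}$, whose solution set is $\{-1\}$. Set $y=0$, $d=1$, $u=(1-2\rho)\Lambda$. Then $x'=0$ and $y'=-\lambda\tau$, and the test $\lambda|F(x')-F(y')|\le\sigma|x'-y'|$ reduces to $(1-2\rho)\Lambda+\lambda\le\sigma^2$. This fails for every $\lambda>0$ once $(1-2\rho)\Lambda\ge\sigma^2$, although $y$ is at distance $1$ from the solution set. So $\lambda'\to0$ at restarts gives no contradiction.

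Second, even granting $\operatorname{dist}(y_{r,k_r},\operatorname{Sol}(F,B))\to0$, you have only shown that $\|y_{r,k}-u_{r,k}\|$ is bounded; the $\mathcal{M}_k$ estimate with a fixed $x_\star$ gives nothing more. So nothing transfers to cluster points of $u$.

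The missing idea is to extract the progress from the first step of each epoch, where the anchor perturbs nothing. There $\tau_{r,1}=1$, $x_{r,1}=u_{r,0}$ and $y_{r,1}=J_{\lambda_{r,1}B}(u_{r,0}-\lambda_{r,1}F(u_{r,0}))$. The test is therefore exactly the one in \Cref{prop:stepsize-lower-bound}, and backtracking from $\alpha_{r,1}\ge\underline{\alpha}_1$ gives $\lambda_{r,1}\ge\min\{\beta\delta,\underline{\alpha}_1\}$ near any non-solution.

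Theorem~\ref{thm:maeg-properties}(ii) at $k=1$ (where $\Lambda_1=\lambda_1$) gives $\|u_{r,0}-x_\star\|^2-\|u_{r,1}-x_\star\|^2\ge\rho(1-\rho)\lambda_{r,1}^2\|d_{r,1}\|^2$. Chaining this with your Fejér property across epochs yields $\sum_r\lambda_{r,1}^2\|d_{r,1}\|^2<\infty$; here $\rho>0$ is essential. The line-search inequality and the triangle inequality give $\|d_{r,1}\|\ge\frac{1-\sigma}{\lambda_{r,1}}\|u_{r,0}-J_{\lambda_{r,1}B}(u_{r,0}-\lambda_{r,1}F(u_{r,0}))\|$. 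By \Cref{lem:nonincreasing-resolvent} this is at least $\frac{1-\sigma}{\overline{\alpha}_1}\|u_{r,0}-J_{\overline{\alpha}_1 B}(u_{r,0}-\overline{\alpha}_1 F(u_{r,0}))\|$. A non-solution cluster point $\hat{x}$ of $\{u_{r,0}\}$ would then keep infinitely many summands bounded below, contradicting summability.

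Hence $\hat{x}\in\operatorname{Sol}(F,B)$, and Fejér monotonicity gives $u_{r,k}\to\hat{x}$. Only after this does the gap vanish: the per-epoch $\mathcal{M}_k$ bound of \Cref{prop:yu-gap} with $x_\star=\hat{x}$ gives $\|y_{r,k}-u_{r,k}\|^2\le C\|u_{r,0}-\hat{x}\|^2\to0$.
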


\begin{proof}
We consider two cases regarding the restart mechanism.\\
\noindent \textbf{Case (i): Algorithm~\ref{alg:maeg-r} restarts only a finite number of times.}

Suppose Algorithm~\ref{alg:maeg-r} restarts $R-1$ times. Then for all sufficiently large $k$, the algorithm remains in the $R$-th epoch. Hence, the stepsize satisfies $\underline{\lambda}_R \leq \lambda_{R,k} \leq \overline{\lambda}_R$. By Theorem~\ref{thm:maeg-convergence}, the sequences $\{u_{R,k}\}$ and $\{y_{R,k}\}$ converge to the same point $x_\star \in \operatorname{Sol}(F,B)$. This completes the proof for the first case.

\medskip
\noindent \textbf{Case (ii): Algorithm~\ref{alg:maeg-r} restarts infinitely many times.}

Suppose for each epoch $r \in \mathbb{N}$, Algorithm~\ref{alg:maeg-r} accepts $k_r\ge 1$ updates before restarting. At such a restart, $d_{r,k_r}\neq0$; otherwise Proposition~\ref{prop:reduced-gradient-norm} gives $\mathcal{R}_{\mathrm{nat}}^c(y_{r,k_r})=0$, and the algorithm would stop. Moreover, by Theorem~\ref{thm:maeg-properties}(i), $\langle u_{r,k_r}-y_{r,k_r},d_{r,k_r}\rangle
\ge \frac{1}{2}(1-2\rho)\Lambda_{r,k_r}\|d_{r,k_r}\|^2>0$.
The transition to the next epoch is defined by
\begin{equation*}
u_{r+1,0} = \Pi_{\operatorname{cl}(\operatorname{dom} B)}\left(u_{r,k_r} - \kappa \gamma_{r,k_r} d_{r,k_r}\right),\quad \gamma_{r,k_r} = \langle u_{r,k_r} - y_{r,k_r},d_{r,k_r}\rangle /\|d_{r,k_r}\|^2>0.
\end{equation*}
Using the nonexpansiveness of $\Pi_{\operatorname{cl}(\operatorname{dom} B)}(\cdot)$, for any $x_\star \in \operatorname{Sol}(F,B)$, we have
\begin{equation*}
\begin{aligned}
\|u_{r+1,0}-x_\star\|^2 
&= \left\| \Pi_{\operatorname{cl}(\operatorname{dom} B)}\left(u_{r,k_r} - \kappa\gamma_{r,k_r}d_{r,k_r}\right) - x_\star \right\|^2 \\
&\leq \|u_{r,k_r}- x_\star - \kappa \gamma_{r,k_r}d_{r,k_r}\|^2 \\
&= \|u_{r,k_r}-x_\star\|^2 - 2\kappa \gamma_{r,k_r}\langle u_{r,k_r} - x_\star, d_{r,k_r}\rangle + \kappa^2 \gamma_{r,k_r}^2 \|d_{r,k_r}\|^2 \\
&\leq \|u_{r,k_r}-x_\star\|^2 - (2\kappa-\kappa^2) \langle u_{r,k_r} - y_{r,k_r}, d_{r,k_r}\rangle^2 / \|d_{r,k_r}\|^2,
\end{aligned}
\end{equation*}
where the last inequality is derived from the fact that $\langle u_{r,k_r} - x_\star, d_{r,k_r}\rangle \geq \langle u_{r,k_r} - y_{r,k_r}, d_{r,k_r}\rangle$ and the definition of $\gamma_{r,k_r}$. Since $0\leq \kappa \leq 2$, we obtain $\|u_{r+1,0}-x_\star\|\leq \|u_{r,k_r}-x_\star\|$. Furthermore, from Theorem~\ref{thm:maeg-properties}(ii), the sequence of distances $\{\|u_{r,k}-x_\star\|\}_{k \geq 0}$ is non-increasing within an epoch, which implies $\|u_{r+1,0}-x_\star\| \leq \|u_{r,k_r}-x_\star\| \leq \|u_{r,1}-x_\star\|$.
    
On the other hand, applying Theorem~\ref{thm:maeg-properties}(ii) with $k=1$, we have:
\begin{equation}\label{eq:restart-u-bound}
    \|u_{r,0}-x_\star\|^2 
    \ge \|u_{r,1}-x_\star\|^2 + (\rho-\rho^2)\lambda_{r,1}^2\|d_{r,1}\|^2 
    \ge \|u_{r+1,0}-x_\star\|^2 + (\rho-\rho^2)\lambda_{r,1}^2\|d_{r,1}\|^2.
\end{equation}
Summing~\eqref{eq:restart-u-bound} over $r$ from $1$ to $+\infty$, we obtain
\begin{equation}\label{eq:restart-series-upper-bound}
    \sum_{r=1}^{+\infty} \lambda_{r,1}^2\|d_{r,1}\|^2 \leq \frac{1}{\rho(1-\rho)} \|u_{1,0}-x_\star\|^2 < +\infty.
\end{equation}

Next, we estimate a lower bound for $\sum_{r=1}^{+\infty} \lambda_{r,1}^2\|d_{r,1}\|^2$. When $k=1$, the stepsize condition is equivalent to 
\begin{equation}\label{eq:restart-d-lower-bound-1}
    \lambda_{r,1}\|F(y_{r,1})-F(x_{r,1})\|\leq \sigma \|J_{\lambda_{r,1}B}(x_{r,1}-\lambda_{r,1}F(x_{r,1}))-x_{r,1}\|.
\end{equation}
Recall that from the definition of $d_{r,1}$, we have
\begin{equation}\label{eq:restart-d-lower-bound-2}
    F(y_{r,1})-F(x_{r,1})=d_{r,1}-\frac{1}{\lambda_{r,1}}(x_{r,1}-J_{\lambda_{r,1}B}(x_{r,1}-\lambda_{r,1}F(x_{r,1})))
\end{equation}
Combining~\eqref{eq:restart-d-lower-bound-1},~\eqref{eq:restart-d-lower-bound-2} and the triangle inequality yields
\begin{equation*}
    \|d_{r,1}\| \ge \frac{1-\sigma}{\lambda_{r,1}}\|x_{r,1}-J_{\lambda_{r,1}B}(x_{r,1}-\lambda_{r,1}F(x_{r,1}))\|.    
\end{equation*}
Since $x_{r,1}=u_{r,0}$ and $\lambda_{r,1} \leq \overline{\alpha}_1$, using \Cref{lem:nonincreasing-resolvent}, we obtain 
\begin{equation*}
    \|d_{r,1}\| \ge \frac{1-\sigma}{\overline{\alpha}_1}\|u_{r,0}-J_{\overline{\alpha}_1B}(u_{r,0}-\overline{\alpha}_1 F(u_{r,0}))\|.    
\end{equation*}

We now prove the main result by contradiction. From~\eqref{eq:restart-u-bound}, the sequence $\{u_{r,0}\}$ is bounded. Thus, there exists a subsequence $\{u_{r_j,0}\}$ converging to a point $\hat{x}$ as $j \to +\infty$. Suppose $\hat{x} \notin \operatorname{Sol}(F,B)$. Let $s = \operatorname{dist}(\hat{x}, \operatorname{Sol}(F,B)) > 0$ and consider the compact set $S = \mathbb{B}(\hat{x}, s/2)\cap \operatorname{cl}(\operatorname{dom} B)$. By Proposition~\ref{prop:stepsize-lower-bound}, there exists $\delta > 0$ such that $\lambda\|F(J_{\lambda B}(x-\lambda F(x)))-F(x)\| \leq \sigma \|J_{\lambda B}(x-\lambda F(x))-x\|$, $\forall \lambda \in (0,\delta]$, $\forall x \in S$.
Furthermore, since $S$ is compact and $S\cap\operatorname{Sol}(F,B) = \emptyset$, there exists a constant $\delta_0 > 0$ such that $\|x - J_{\overline{\alpha}_1 B}(x - \overline{\alpha}_1 F(x))\| \geq \delta_0 > 0$, $\forall x\in S$.
For sufficiently large $j$, say $j \geq K$, we have $u_{r_j,0} \in S$. This implies both $\lambda_{r_j,1}\ge \delta_1 :=\min(\beta\delta,\underline{\alpha}_1)>0$ and $\|u_{r_j,0}-J_{\overline{\alpha}_1 B}(u_{r_j,0}-\overline{\alpha}_1 F(u_{r_j,0}))\|\geq \delta_0>0$.
Therefore, we have
\begin{equation*}
\sum_{r=1}^{+\infty} \lambda_{r,1}^2\|d_{r,1}\|^2\ge \sum_{j=K}^{+\infty} \lambda_{r_j,1}^2\|d_{r_j,1}\|^2\ge \sum_{j=K}^{+\infty} (\frac{1-\sigma}{\overline{\alpha}_1}\delta_0\delta_1)^2 = +\infty,
\end{equation*}
which contradicts~\eqref{eq:restart-series-upper-bound}. Consequently, we have $\hat{x} \in \operatorname{Sol}(F,B)$. Since the distance sequence $\|u_{r,k} - \hat{x}\|$ is monotonically non-increasing and the subsequence converges to $\hat{x}$, the whole sequence $\{u_{r,k}\}$ converges to $\hat{x}$. It follows from Proposition~\ref{prop:yu-gap} that
\begin{equation*}
    \| y_{r,k} - u_{r,k} \|^2 \leq \left( \frac{ 1 - \rho }{ \rho } + \frac{ \sigma^2 M }{ ( 1 - \sigma^2 ) ( 1 - 2 \rho )^2 } \right) \| u_{r,0} - \hat{x} \|^2.
\end{equation*}
Since $\|u_{r,0}-\hat{x}\| \to 0$ and $u_{r,k} \to \hat{x}$ as $r \to +\infty$, we conclude that $y_{r,k}\to\hat{x}$.
\end{proof}
\begin{remark}
When $F(\cdot)$ is globally Lipschitz continuous with constant $L$, since $\underline{\lambda}_r\le\varepsilon_r\to 0$ as $r\to +\infty$, Algorithm~\ref{alg:maeg-r} restarts only a finite number of times, so the complexity results of Algorithm~\ref{alg:maeg} still hold.
\end{remark}

\begin{remark}\label{rmk:convergence}
    The convergence result above is insensitive to the specific rule used to trigger a restart. In particular, additional restart criteria can be incorporated into Algorithm~\ref{alg:maeg-r}, provided that each restart is performed after an accepted MAEG update and the new anchor is updated according to~\eqref{eq:restart-u}. The same proof as that of Theorem~\ref{thm:maeg-r-convergence} then applies.
\end{remark}

\section{Numerical Experiments}
In this section, we report some numerical experiments to evaluate the performance of the proposed MAEG method on a diverse collection of problem instances. The test set is designed to cover a range of operator classes, including cases where the operator $F(\cdot)$ is globally Lipschitz continuous, locally Lipschitz continuous, or merely continuous.

All algorithms are implemented in Julia~1.12.3, and all experiments are executed on an NVIDIA A100-SXM4-80GB GPU. Our implementations are written to perform the iterative updates with zero GPU memory allocations, and they leverage kernel fusion to reduce kernel launch overhead and improve overall computational efficiency.

To validate the efficiency of the proposed algorithm, we compare the following methods.
\begin{enumerate}
    \item[(i)] \textbf{MAEG-u / MAEG-y:}
    the proposed Moving-Anchored Extra-Gradient method with line-search parameters
    $(\beta,\sigma)=(0.7,0.99)$ and trial stepsize
    $\alpha_{r,k}=\min\{1.02\lambda_{r,k-1},1000\lambda_{r,1}\}$. MAEG-u uses $\rho=0.2$ and restarts from the moving anchor point according to~\eqref{eq:restart-u} with $\kappa=2$,
    whereas MAEG-y uses $\rho=0$ and restarts from the current iterate by setting
    $y_{r+1,0}=u_{r+1,0}=y_{r,k}$.
    These two values of $\rho$ are used as the default choices because they perform well for the corresponding restart strategies in our preliminary tests.

    \item[(ii)] \textbf{CFEG:}
    the composite Fast Extra-Gradient method~\cite{lee2021fast,cai2024accelerated}
    with fixed stepsize $0.99/L$, where $L$ is the global Lipschitz constant of $F(\cdot)$.
    We also add a projection step onto $X$ in the same way as in MAEG.
    This baseline is used only in the globally Lipschitz experiments.

    \item[(iii)] \textbf{MFBS:} 
    the Modified Forward-Backward Splitting method~\cite{tseng2000modified}.
    Following the practice in our experiments, we use the same line-search strategy as in MAEG-u, which improves empirical performance and is consistent with the analysis in~\cite{tseng2000modified}.
\end{enumerate}

To further enhance the performance of \textbf{MAEG-u / MAEG-y}, we incorporate adaptive restart mechanisms based on the behavior of $\|d_k\|$. Following ideas similar to those in~\cite{chen2025hpr}, a restart is triggered whenever one of the following conditions is met:
\begin{enumerate}[(i)]
    \item Sufficient decay of $\| d_{r,k} \|$: $ \| d_{r,k}\| \leq \alpha_1 \| d_{r,1} \| $;
    
    \item Necessary decay + no local progress of $\| d_{r,k} \|$: $ \| d_{r,k} \| \leq \alpha_2 \| d_{r,1} \|$ and $ \| d_{r,k+1} \| > \| d_{r,k} \|$;
    \item 
    Long inner loop: $k \geq \alpha_3 K$, where $K$ is the cumulative number of accepted iterations, including the current inner loop, and is not reset after restarts.
\end{enumerate}
The first two conditions can be viewed as restart criteria with respect
to $y_{r,k}$, since $d_{r,k}\in(F+B)(y_{r,k})$. For MAEG-u, where $\rho>0$, they can also be interpreted with respect to $u_{r,k}$ through
$d_{r,k}=(u_{r,k-1}-u_{r,k})/(\rho\lambda_{r,k})$.
Throughout the experiments, we set
$(\alpha_1,\alpha_2,\alpha_3)=(0.1,0.6,0.2)$. The same restart strategy is also applied to CFEG for consistency.

We adopt the stopping criterion based on the relative composite natural residual, defined as
\begin{equation*}
   \widetilde{\mathcal{R}}^c_{\mathrm{nat}}(x)
   = \frac{\| x - J_B( x - F(x) ) \|}
          {1 + \| x \|_{\infty} + \| F(x) \|_{\infty}} .
\end{equation*}
All algorithms are terminated once the residual satisfies
$\widetilde{\mathcal{R}}^c_{\mathrm{nat}}(x) < \epsilon$,
with a fixed tolerance $\epsilon = 10^{-6}$, or when a maximum wall-clock time of $3600$ seconds is reached. The residual is evaluated every $100$ iterations.

\subsection{Bilinear Matrix Games}

We consider two-player zero-sum matrix games defined by a payoff matrix \( A \in \mathbb{R}^{n \times n} \). Player I chooses a mixed strategy \( x \in \Delta_n \), and Player II chooses \( y \in \Delta_n \), where the \( k \)-dimensional probability simplex is defined as $\Delta_k = \left\{ (x_1, \dots, x_k) \in \mathbb{R}_+^k \mid \sum_{i=1}^k x_i = 1 \right\}$. The game can be modeled as the classic convex-concave minimax problem:
\begin{equation*}
    \min_{x \in \Delta_n} \max_{y \in \Delta_n} x^\top A y.
\end{equation*}
This problem can be further reformulated as a monotone inclusion problem~\eqref{eq:mi} over the compact convex set \( K = \Delta_n \times \Delta_n \), with monotone operator $F(x, y) = (A y, -A^\top x)$ and $ B(x, y) = (N_{\Delta_n}(x), N_{\Delta_n}(y))$, 
where \( N_{\Delta_k} \) denotes the normal cone to \( \Delta_k \). Existence of a solution is guaranteed by Nash’s theorem~\cite{nash1951non}. Note that $F(x,y)$ is globally Lipschitz continuous with constant
$L=\sqrt{\lambda_1(A^\top A)}$, where $\lambda_1(A^\top A)$ denotes
the largest eigenvalue of the symmetric matrix
$A^\top A$. In the experiments, we estimate $\lambda_1(A^\top A)$
by the power method, using products with $A$ and $A^\top$.

To evaluate $F(x,y)$, we employ the \texttt{CUSPARSE\_SPMV\_CSR\_ALG2} routine whenever $A$ is sparse, which guarantees deterministic results across GPU executions.
The resolvent of $B$ corresponds to the Euclidean projection onto the simplex $\Delta_n$. For any $v\in\mathbb{R}^n$, this projection can be computed by:
\begin{equation} \label{eq:simplex-root}
    (\Pi_{\Delta_n}(v))_i = \max(v_i-\tau,0), \text{ where }
    \phi(\tau) \coloneqq 1 - \sum_{i=1}^n \max(v_i - \tau, 0) = 0.
\end{equation}
While the univariate equation $\phi(\tau) = 0$ can be solved efficiently on the CPU by sorting/pivot-based algorithm~\cite{duchi2008efficient, condat2016fast}, these algorithms are ill-suited for the SIMT (Single Instruction, Multiple Threads) architecture of GPUs. The recursive pivot selection induces high branch divergence within warps and requires non-coalesced memory access patterns, severely degrading performance.

Instead, we exploit the fact that $\phi(\tau)$ is a monotone, piecewise-linear function and solve~\eqref{eq:simplex-root} using a semismooth Newton method, summarized in \Cref{alg:ssn-simplex}.

\begin{algorithm}[!htbp]
    \caption{Semismooth Newton Method for Simplex Projection}
    \label{alg:ssn-simplex}
    \begin{algorithmic}[1]
    
    \STATE \textbf{Input:} Vector $v \in \mathbb{R}^n$ and tolerance $\epsilon > 0$.

    \STATE \textbf{Initialization:} Set the initial guess $\tau_0 = \left(\sum_{i=1}^n v_i - 1\right)/n$ and initialize $k = 0$.

    \STATE \textbf{Step 1:} Compute $\phi(\tau_k)$ and the generalized Jacobian $J_k$
    \begin{equation*}
        \phi(\tau_k) = 1 - \sum_{i=1}^n \max(0, v_i - \tau_k)  \quad \text{and} \quad J_k = \sum_{i=1}^n \mathbb{I}(v_i > \tau_k).
    \end{equation*}
    
    \STATE \textbf{Step 2:} If $|\phi(\tau_k)| < \epsilon$, proceed to \textbf{Step 3}. Otherwise, compute the update
    \begin{equation*}
        \tau_{k+1} = \tau_k - J_k^{-1}\phi(\tau_k),
    \end{equation*}
    increment $k \gets k+1$ and return to \textbf{Step 1}.

    \STATE \textbf{Step 3:} Compute the final projection vector $w$ by
    \begin{equation*}
        w_i = \max(0, v_i - \tau_k), \quad i = 1, \dots, n.
    \end{equation*}

    \STATE \textbf{Output:} Projected vector $w$.
    
    \end{algorithmic}
\end{algorithm}

This formulation is particularly well suited for GPUs, since all steps in \Cref{alg:ssn-simplex} reduce to simple element-wise operations and parallel reductions. Moreover, with the initialization $\tau_0$ defined above, one can show that $J_k > 0$ for all $k \ge 0$, and that the semismooth Newton iteration enjoys finite termination due to the piecewise-linear structure of $\phi(\tau)$ even when we set the tolerance $\epsilon = 0$.

We evaluate three classes of games, each motivated by distinct structural or practical considerations:
\begin{enumerate}[(i)]
\item Random Sparse Game (Ran).
The payoff matrix is generated by
\[
A_{ij}=
\begin{cases}
r_{ij}, & \text{with probability } p_{\rm nz}=0.4,\\
0, & \text{otherwise},
\end{cases}
\qquad r_{ij}\sim \mathcal{N}(0,100).
\]

\item Cyclic Dominance Game (Cyc).
Let $k=\lfloor n/64\rfloor$. The payoff matrix is
\[
A_{ij}=
\begin{cases}
r_{ij}, & (i-j)\bmod n\in\{1,\dots,k\},\\
-r_{ij}, & (j-i)\bmod n\in\{1,\dots,k\},\\
0, & \text{otherwise},
\end{cases}
\qquad r_{ij}\sim \mathcal{U}[0,1].
\]

\item Logistic Distance Game (Log).
The payoff matrix is defined by
\[
A_{ij}=\frac{1.5}{1+\exp(-|i-j|)}+0.5\,\epsilon_{ij}-1,
\qquad \epsilon_{ij}\sim \mathcal{U}[0,1].
\]
\end{enumerate}

Table~\ref{tab:bimatrix-main-split} reports the performance of
MAEG-u, MAEG-y, CFEG, and MFBS on three families of bimatrix games
under two choices of the set $X$. We first compare the effect of $X$.
For CFEG, the forward-evaluation counts are almost identical under the
two choices of $X$ on every instance where both runs succeed. In the
large-scale regime $n\ge 2^{13}$, the runtime increase caused by the
additional projection is at most about $9.6\%$. This indicates that
projection onto the simplex incurs negligible overhead in practice,
which provides direct evidence of the efficiency of the semismooth
Newton projection routine in \Cref{alg:ssn-simplex}. For the
line-search-based MAEG variants, the effect of $X$ is more
substantial: especially on the Log families, using
$X=\Delta_n\times\Delta_n$ leads to significantly fewer forward evaluations
than using $X=\mathbb{R}^n\times\mathbb{R}^n$.

Against the baselines, both MAEG variants are clearly superior. Even
if each baseline is allowed to use its better of the two choices of
$X$, one of the two MAEG variants is still the fastest method and uses
the fewest forward evaluations on every tested instance. The contrast
is sharpest on the Log family: when $n=2^{13}$, MAEG-u terminates in
$6.6$s with only $9.3\times 10^3$ forward evaluations, whereas the
best baseline requires $21.3$s and $2.9\times 10^4$ forward
evaluations. Numerically, MAEG-u and MAEG-y are close, but MAEG-u is
usually slightly better on medium- and large-scale instances.
Restricting attention to the cases $n\ge 2^{13}$ with
$X=\Delta_n\times\Delta_n$, MAEG-u reduces the running time by about
$4.4\%$--$26.1\%$ and the number of forward evaluations by about
$6.1\%$--$17.1\%$ relative to MAEG-y. Thus, the moving-anchor restart
provides stronger convergence guarantees, as shown in \Cref{thm:maeg-r-convergence} and \Cref{rmk:convergence}, without sacrificing empirical
efficiency.

\begin{table}[!htbp]
\centering
\scriptsize
\setlength{\tabcolsep}{5pt}
\renewcommand{\arraystretch}{1.08}
\caption{Comparison of wall-clock time and forward evaluations for the bimatrix game experiments under two choices of the set $X$. The symbol ``-'' indicates that the algorithm failed to satisfy the stopping criterion within the 1-hour time limit.}
\label{tab:bimatrix-main-split}
\begin{tabular}{cccccccccc}
\toprule
\multicolumn{2}{c}{\textbf{Problem}} & \multicolumn{4}{c}{\textbf{Wall-clock time (s)}} & \multicolumn{4}{c}{\textbf{Forward evaluations}} \\
\cmidrule(lr){1-2}\cmidrule(lr){3-6}\cmidrule(lr){7-10}
Game & $n$ & MAEG-u & MAEG-y & CFEG & MFBS & MAEG-u & MAEG-y & CFEG & MFBS \\
\midrule
\multicolumn{10}{c}{$X=\Delta_n\times\Delta_n$} \\
\midrule
\multirow{6}{*}{Ran}
& $2^{10}$ & 3.1 & \textbf{2.8} & 3.3 & 230.9 & 4.3E4 & \textbf{3.7E4} & 5.5E4 & 3.0E6 \\
& $2^{11}$ & \textbf{4.3} & 4.9 & 6.2 & 94.9 & \textbf{3.6E4} & 4.5E4 & 6.4E4 & 8.2E5 \\
& $2^{12}$ & 8.7 & \textbf{6.7} & 11.5 & 600.2 & 4.8E4 & \textbf{3.7E4} & 7.0E4 & 3.2E6 \\
& $2^{13}$ & \textbf{37.0} & 38.7 & 50.9 & 445.8 & \textbf{7.5E4} & 8.0E4 & 1.1E5 & 9.0E5 \\
& $2^{14}$ & \textbf{107.3} & 145.2 & 191.5 & 471.1 & \textbf{6.1E4} & 7.4E4 & 1.1E5 & 2.6E5 \\
& $2^{15}$ & \textbf{459.4} & 551.7 & 846.0 & 3295.1 & \textbf{6.4E4} & 7.6E4 & 1.2E5 & 4.5E5 \\
\midrule
\multirow{6}{*}{Cyc}
& $2^{10}$ & \textbf{1.3} & 1.7 & 1.7 & 19.2 & \textbf{1.8E4} & 2.2E4 & 3.1E4 & 2.7E5 \\
& $2^{11}$ & \textbf{1.7} & 1.8 & 2.8 & 10.8 & \textbf{2.3E4} & 2.4E4 & 4.6E4 & 1.4E5 \\
& $2^{12}$ & \textbf{2.9} & 2.9 & 5.0 & 20.0 & \textbf{3.0E4} & 3.5E4 & 6.9E4 & 2.2E5 \\
& $2^{13}$ & \textbf{6.9} & 7.6 & 13.7 & 29.8 & \textbf{4.6E4} & 5.2E4 & 1.1E5 & 2.0E5 \\
& $2^{14}$ & \textbf{15.5} & 17.8 & 36.3 & 83.8 & \textbf{5.8E4} & 6.9E4 & 1.5E5 & 3.2E5 \\
& $2^{15}$ & \textbf{60.7} & 64.4 & 138.4 & 284.3 & \textbf{8.7E4} & 9.3E4 & 2.1E5 & 4.0E5 \\
\midrule
\multirow{6}{*}{Log}
& $2^{10}$ & \textbf{0.4} & 0.8 & 42.5 & 1.1 & \textbf{6.8E3} & 8.0E3 & 1.1E6 & 1.8E4 \\
& $2^{11}$ & \textbf{1.0} & 1.1 & 163.5 & 2.5 & \textbf{1.0E4} & 1.1E4 & 2.0E6 & 2.3E4 \\
& $2^{12}$ & \textbf{1.7} & 2.1 & 481.3 & 6.1 & \textbf{7.8E3} & 9.7E3 & 2.4E6 & 2.5E4 \\
& $2^{13}$ & \textbf{6.6} & 7.2 & 2365.6 & 21.3 & \textbf{9.3E3} & 1.0E4 & 3.6E6 & 2.9E4 \\
& $2^{14}$ & \textbf{23.0} & 25.2 & - & 55.2 & \textbf{9.1E3} & 9.9E3 & - & 2.2E4 \\
& $2^{15}$ & \textbf{79.8} & 94.5 & - & 223.2 & \textbf{8.0E3} & 9.5E3 & - & 2.2E4 \\
\midrule
\multicolumn{10}{c}{$X=\mathbb{R}^n\times\mathbb{R}^n$} \\
\midrule
\multirow{6}{*}{Ran}
& $2^{10}$ & 4.3 & \textbf{2.7} & 3.0 & 204.1 & 4.6E4 & \textbf{3.8E4} & 5.5E4 & 3.0E6 \\
& $2^{11}$ & \textbf{4.3} & 4.7 & 5.8 & 105.5 & \textbf{4.2E4} & 4.5E4 & 6.4E4 & 9.9E5 \\
& $2^{12}$ & 9.9 & \textbf{6.7} & 11.1 & 528.4 & 5.7E4 & \textbf{3.8E4} & 7.0E4 & 3.0E6 \\
& $2^{13}$ & \textbf{31.5} & 39.4 & 49.2 & 478.3 & \textbf{6.5E4} & 8.2E4 & 1.1E5 & 9.9E5 \\
& $2^{14}$ & \textbf{119.6} & 130.5 & 193.6 & 480.9 & \textbf{6.8E4} & 7.4E4 & 1.1E5 & 2.7E5 \\
& $2^{15}$ & \textbf{460.2} & 556.9 & 849.6 & 3138.0 & \textbf{6.4E4} & 7.7E4 & 1.2E5 & 4.3E5 \\
\midrule
\multirow{6}{*}{Cyc}
& $2^{10}$ & \textbf{1.3} & 1.6 & 1.5 & 21.0 & \textbf{2.0E4} & 2.4E4 & 3.1E4 & 3.3E5 \\
& $2^{11}$ & \textbf{1.8} & 1.9 & 2.4 & 13.0 & \textbf{2.7E4} & 2.9E4 & 4.6E4 & 1.9E5 \\
& $2^{12}$ & \textbf{2.8} & 3.3 & 4.5 & 27.2 & \textbf{3.8E4} & 4.4E4 & 6.9E4 & 3.4E5 \\
& $2^{13}$ & 9.6 & \textbf{9.1} & 12.5 & 37.8 & 7.3E4 & \textbf{7.2E4} & 1.1E5 & 2.9E5 \\
& $2^{14}$ & \textbf{22.5} & 24.9 & 34.2 & 138.9 & \textbf{8.8E4} & 9.5E4 & 1.5E5 & 5.4E5 \\
& $2^{15}$ & \textbf{81.3} & 93.6 & 133.3 & 464.8 & \textbf{1.2E5} & 1.4E5 & 2.1E5 & 6.8E5 \\
\midrule
\multirow{6}{*}{Log}
& $2^{10}$ & 51.1 & 57.4 & \textbf{38.5} & 802.9 & \textbf{9.9E5} & 1.1E6 & 1.1E6 & 1.6E7 \\
& $2^{11}$ & 163.2 & 201.4 & \textbf{154.5} & - & \textbf{1.7E6} & 2.1E6 & 2.0E6 & - \\
& $2^{12}$ & \textbf{550.8} & 638.0 & - & - & \textbf{2.4E6} & 2.5E6 & - & - \\
& $2^{13}$ & - & - & - & - & - & - & - & - \\
& $2^{14}$ & - & - & - & - & - & - & - & - \\
& $2^{15}$ & - & - & - & - & - & - & - & - \\
\bottomrule
\end{tabular}
\end{table}

\subsection{p-Laplacian Obstacle Problem}

To further evaluate the performance of the proposed algorithm, we consider the discretized $p$-Laplacian obstacle problem with $p>2$, which models the equilibrium of a nonlinear elastic membrane subject to a unilateral constraint.

Let $\Omega = (0,1)^2$ be the domain. The problem is modeled as the minimization of the potential energy subject to physical constraints, which is stated as
\begin{equation}\label{eq:continuous-energy}
    \min_{u \in \mathcal{K}} \mathcal{J}(u) = \int_{\Omega} \left( \frac{1}{p} |\nabla u|^p - fu \right) \, dx,
\end{equation}
where $\mathcal{K} = \{ v \in W^{1,p}_0(\Omega) \mid v \ge \psi \text{ a.e.} \}$ is the convex set of feasible configurations. We employ a finite difference discretization on a regular grid of size $N \times N$, with spacing $h = 1/(N-1)$. Let $\mathbf{u} \in \mathbb{R}^{N\times N}$ denote the vector of nodal values. We approximate the continuous energy with the discrete sum
\begin{equation} \label{eq:discrete-energy}
    J_h(\mathbf{u}) = h^2 \sum_{i,j} \left[ \frac{1}{p} \left( \left| \frac{u_{i+1,j}-u_{i,j}}{h} \right|^p + \left| \frac{u_{i,j+1}-u_{i,j}}{h} \right|^p \right) - f_{i,j} u_{i,j} \right],
\end{equation}
where the sums involving forward differences are taken only over valid horizontal and vertical grid edges. The constraints are enforced by the discrete feasible set
\[
\mathbf{K}=\left\{\mathbf{v}\in\mathbb{R}^{N\times N}\mid\mathbf{v}\ge \boldsymbol{\psi}\text{ on interior nodes},\mathbf{v}|_{\partial\Omega}=0 \right\}.
\]

Since $J_h$ is convex and differentiable (for $p \ge 2$), and $\mathbf{K}$ is a non-empty convex set, the necessary and sufficient condition for optimality is given by the monotone inclusion problem~\eqref{eq:mi} with $F(\mathbf{u}) = \nabla J_h(\mathbf{u})$ and $B(\mathbf{u}) = N_{\mathbf{K}}(\mathbf{u}) $, where $F$ is monotone and locally Lipschitz continuous when $p>2$  and $B$ is maximal monotone.

In our numerical experiments, we examine two specific test cases characterized by different obstacle configurations $\psi(x,y)$ and constant external forces $f$:
\begin{enumerate}[(i)]
\item Off-Center Gaussian: The obstacle is a single Gaussian peak centered at $(0.3, 0.6)$ with a constant forcing term $f = -20$. The obstacle is defined as:
\begin{equation*}
    \psi(x,y) = 0.5 \exp\left( -\frac{(x-0.3)^2 + (y-0.6)^2}{0.05} \right).
\end{equation*}
    
\item Double Hump: This case features a ``camel back'' obstacle composed of two Gaussian hills, testing the membrane's deformation into the valley between peaks under a stronger force $f = -30$. The obstacle is given by:
\begin{equation*}
    \psi(x,y) = 0.5 \left[ \exp\left( -\frac{(x-0.3)^2 + (y-0.5)^2}{0.05} \right) + \exp\left( -\frac{(x-0.7)^2 + (y-0.5)^2}{0.05} \right) \right].
\end{equation*}
\end{enumerate}

Table~\ref{tab:plaplacian} reports the performance of MAEG-u, MAEG-y, and MFBS on these two families of $p$-Laplacian obstacle problems. The main message is that the moving-anchor restart substantially improves the practical behavior of the MAEG framework on this problem class. While MAEG-y is consistently slower than MFBS on every instance where both methods terminate, MAEG-u is faster than MFBS on all but the two easiest cases. Restricting attention to the successful instances with $p\ge 4.0$, MAEG-u reduces the wall-clock time by about $26.1\%$--$56.7\%$ and the number of forward evaluations by about $29.4\%$--$56.9\%$ relative to MFBS. The advantage over MAEG-y is even larger, which highlights the practical benefit of the moving-anchor restart on these difficult nonlinear obstacle problems.

\begin{table}[!htbp]
\centering
\caption{Comparison of wall-clock time and forward evaluations for the $p$-Laplacian experiments. The symbol ``-'' indicates that the algorithm failed to satisfy the stopping criterion within the 1-hour time limit.}
\label{tab:plaplacian}
\scriptsize
\setlength{\tabcolsep}{5pt}
\renewcommand{\arraystretch}{1.08}
\begin{tabular}{ccccccccc}
\toprule
\multicolumn{3}{c}{\textbf{Problem}} & \multicolumn{3}{c}{\textbf{Wall-clock time (s)}} & \multicolumn{3}{c}{\textbf{Forward evaluations}} \\
\cmidrule(lr){1-3}\cmidrule(lr){4-6} \cmidrule(lr){7-9}
Name & $N$ & $p$ & \multicolumn{1}{c}{MAEG-u} & \multicolumn{1}{c}{MAEG-y} & \multicolumn{1}{c}{MFBS} & \multicolumn{1}{c}{MAEG-u} & \multicolumn{1}{c}{MAEG-y} & \multicolumn{1}{c}{MFBS} \\
\midrule

\multirow{6}{*}{\shortstack{Off-Center\\Gaussian}}
& $2^8$ & 3.5 & 9.3 & 19.5 & \textbf{8.8} & \textbf{2.2E5} & 5.2E5 & \textbf{2.2E5} \\
& $2^8$ & 4.0 & \textbf{21.3} & 112.5 & 49.2 & \textbf{5.6E5} & 3.0E6 & 1.3E6 \\
& $2^8$ & 4.5 & \textbf{97.7} & 504.8 & 225.8 & \textbf{2.6E6} & 1.3E7 & 6.0E6 \\
& $2^9$ & 3.5 & \textbf{63.8} & 271.8 & 117.1 & \textbf{9.3E5} & 4.0E6 & 1.8E6 \\
& $2^9$ & 4.0 & \textbf{471.8} & 1877.0 & 835.0 & \textbf{6.8E6} & 2.7E7 & 1.2E7 \\
& $2^9$ & 4.5 & \textbf{2721.2} & - & - & \textbf{3.9E7} & - & - \\
\midrule

\multirow{6}{*}{\shortstack{Double\\Hump}}
& $2^8$ & 3.5 & 5.1 & 10.4 & \textbf{4.3} & 1.4E5 & 2.8E5 & \textbf{1.2E5} \\
& $2^8$ & 4.0 & \textbf{22.5} & 83.7 & 35.8 & \textbf{6.0E5} & 2.2E6 & 9.6E5 \\
& $2^8$ & 4.5 & \textbf{86.4} & 417.2 & 185.7 & \textbf{2.3E6} & 1.1E7 & 4.9E6 \\
& $2^9$ & 3.5 & \textbf{43.4} & 143.8 & 56.8 & \textbf{6.2E5} & 2.1E6 & 8.6E5 \\
& $2^9$ & 4.0 & \textbf{383.9} & 1423.9 & 623.6 & \textbf{5.5E6} & 2.1E7 & 9.3E6 \\
& $2^9$ & 4.5 & \textbf{2508.8} & - & 3394.1 & \textbf{3.6E7} & - & 5.1E7 \\
\bottomrule
\end{tabular}
\end{table}

\subsection{Quadratic Minimization with \texorpdfstring{$\ell_2^p$}{L2p}-Regularization}
Consider the convex optimization problem
\begin{equation} \label{prob:regularized-qp}
\begin{aligned}
&\min_{x} \quad \frac{1}{2}x^\top Hx - h^\top x + \frac{1}{p}\|x\|_2^p, \\
&\text{ s.t.} \quad Ax=b,
\end{aligned}
\end{equation}
where the problem data $(A,H,b,h)$ are taken from the hard instance introduced in~\cite{ouyang2021lower}. In particular, \eqref{prob:regularized-qp} is obtained by adding the $\ell_2^p$-regularization term to that benchmark instance.
The above problem can be written as the monotone equation
\begin{equation*}
    0=F(x,\lambda):=\left(Hx-h+A^\top\lambda + \|x\|_2^{p-2}x, b - Ax\right).
\end{equation*}
When $1<p<2$, the operator $F(\cdot)$ is only continuous, not even locally Lipschitz continuous.

Table~\ref{tab:xu} reports the performance of MAEG-u and MFBS for
dimensions $n$ ranging from $2^{10}$ to $2^{15}$ and regularization
parameters $p \in \{1.05, 1.1, 1.15\}$. In this regime, the
Lipschitz-based accelerated algorithms considered above are no longer
applicable. By contrast, MAEG-u remains applicable, with convergence
guaranteed by Theorem~\ref{thm:maeg-r-convergence}.

From the computational point of view, MAEG-u outperforms MFBS on all
tested instances. On the instances where both methods terminate,
MAEG-u reduces the wall-clock time by about $55.5\%$--$96.0\%$ and the
number of forward evaluations by about $56.7\%$--$96.1\%$ relative to
MFBS, with the gap widening as $p$ approaches $1$. In the stiffest
setting $p=1.05$, MFBS fails to satisfy the stopping criterion within
the one-hour time limit for all dimensions $n \ge 2^{13}$, whereas
MAEG-u still solves these instances within $421.1$s. This shows that
the moving-anchor restart keeps the MAEG framework effective and
computationally advantageous even in this genuinely non-Lipschitz
regime.

\begin{table}[!htbp]
\centering
\scriptsize
\setlength{\tabcolsep}{5pt}
\caption{Comparison of wall-clock time and forward evaluations for the $\ell_2^p$-regularized quadratic minimization problem. The symbol ``-'' indicates that the algorithm failed to satisfy the stopping criterion within the 1-hour time limit.}
\label{tab:xu}
\renewcommand{\arraystretch}{1.08}
\begin{tabular}{cccccc}
\toprule
\multicolumn{2}{c}{\textbf{Problem}} & \multicolumn{2}{c}{\textbf{Wall-clock time (s)}} & \multicolumn{2}{c}{\textbf{Forward evaluations}} \\
\cmidrule(r){1-2} \cmidrule(lr){3-4} \cmidrule(l){5-6}
$p$ & $n$ & MAEG-u & MFBS & MAEG-u & MFBS \\
\midrule
\multirow{6}{*}{1.05} & $2^{10}$ & \textbf{15.6} & 180.4 & \textbf{1.4E5} & 1.8E6 \\
& $2^{11}$ & \textbf{29.4} & 503.9 & \textbf{2.8E5} & 4.8E6 \\
& $2^{12}$ & \textbf{54.9} & 1375.4 & \textbf{5.1E5} & 1.3E7 \\
& $2^{13}$ & \textbf{110.4} & - & \textbf{1.0E6} & - \\
& $2^{14}$ & \textbf{223.1} & - & \textbf{2.0E6} & - \\
& $2^{15}$ & \textbf{421.1} & - & \textbf{3.6E6} & - \\
\cmidrule{1-6}
\multirow{6}{*}{1.10} & $2^{10}$ & \textbf{14.3} & 67.9 & \textbf{1.4E5} & 6.7E5 \\
& $2^{11}$ & \textbf{26.9} & 177.3 & \textbf{2.6E5} & 1.7E6 \\
& $2^{12}$ & \textbf{54.4} & 447.8 & \textbf{5.1E5} & 4.2E6 \\
& $2^{13}$ & \textbf{110.0} & 1134.1 & \textbf{1.0E6} & 1.0E7 \\
& $2^{14}$ & \textbf{204.2} & 2825.8 & \textbf{1.8E6} & 2.5E7 \\
& $2^{15}$ & \textbf{418.3} & - & \textbf{3.6E6} & - \\
\cmidrule{1-6}
\multirow{6}{*}{1.15} & $2^{10}$ & \textbf{13.3} & 29.9 & \textbf{1.3E5} & 3.0E5 \\
& $2^{11}$ & \textbf{27.0} & 72.7 & \textbf{2.6E5} & 6.9E5 \\
& $2^{12}$ & \textbf{49.8} & 174.7 & \textbf{4.6E5} & 1.6E6 \\
& $2^{13}$ & \textbf{100.0} & 416.5 & \textbf{9.2E5} & 3.8E6 \\
& $2^{14}$ & \textbf{201.8} & 988.0 & \textbf{1.8E6} & 8.8E6 \\
& $2^{15}$ & \textbf{415.4} & 2379.4 & \textbf{3.6E6} & 2.0E7 \\
\bottomrule
\end{tabular}
\end{table}

\section{Conclusion}
This paper presented the Moving-Anchored Extra-Gradient (MAEG) method for solving monotone inclusion problems. Under Lipschitz continuity of the forward operator, MAEG attains an $\mathcal{O}(1/k)$ non-asymptotic complexity, and for $0<\rho<\frac{1}{2}$, it further achieves an $o(1/k)$ asymptotic rate. We also developed a restart strategy based on the non-increasing distance between the moving anchor point and the solution set. This strategy ensures convergence for continuous monotone operators even without local Lipschitz continuity, while preserving the corresponding Lipschitz-case complexity guarantees after finitely many restarts. Numerical experiments also confirm the practical effectiveness of the proposed method.

\bibliographystyle{plain}
\bibliography{ref}

\begin{appendices}
\section{A Counterexample to the Claim in \texorpdfstring{\cite{khobotov1987modification}}{Khobotov}}

Consider the following variational inequality problem:
\begin{equation*}
    \text{Find } x^{\star} \in Q \text{ such that } \left\langle T(x^\star), x-x^\star\right\rangle\geq 0, \quad \forall x \in Q,
\end{equation*}
where $Q\subset \mathbb{R}^n$ is a nonempty, convex and closed set, and $T:\mathbb{R}^n\to \mathbb{R}^n$ is continuous and monotone on $Q$.

Following the formulation in~\cite{khobotov1987modification}, let $x_\star$ be an arbitrary solution to the problem. We define the following sets depending on $x_\star$:
\begin{align*}
    \Bar{R}_0(x_\star) &= \{x \in \mathbb{R}^n \mid \|x-x_\star\|\leq\|x_0-x_\star\|\},\quad R_0(x_\star)= Q \cap \Bar{R}_0(x_\star), \\
    \hat{R}_0(x_\star) &= \left\{\bar{x} \in \mathbb{R}^n \mid \bar{x}=\Pi_Q(x-\tilde{\alpha} \tilde{b}_0), \, x\in R_0(x_\star), \, \tilde{\alpha}\in [0,\bar{\alpha}],\, \tilde{b}_0\in M_0 \right\},
\end{align*}
where $ \bar{\alpha} > 0 $ is a given parameter and $M_0 = \big\{ z\in \mathbb{R}^n \mid \|z\|\leq \sup\nolimits_{x\in R_0(x_\star)} \|T(x)\| \big\}$.
    
In~\cite{khobotov1987modification}, it is asserted that there exists a constant $L_0<\infty$ such that the Lipschitz condition $\|T(x)-T(y)\|\leq L_0\|x-y\|$ holds for all $x, y \in \hat{R}_0(x_\star)$. We now construct a counterexample to demonstrate that this statement may fail if local Lipschitz continuity of the operator $T$ is not explicitly assumed.

\begin{example}
    Let $n=1$ and $Q=\mathbb{R}$. Define the operator $T$ as:
    \begin{equation*}
        T(x)=\begin{cases}
            1+\sqrt{x}, & x\ge 0, \\
            1-\sqrt{-x}, & x<0.
        \end{cases}
    \end{equation*}
    The operator $T$ is continuous and monotone. The variational inequality has a unique solution $x_\star = -1$.
\end{example}

Let the initial point be $x_0=0$. Then we have $R_0(x_\star)=\Bar{R}_0(x_\star)=[-2,0]$ and $[-2,0]\subseteq\hat{R}_0(x_\star)$. Consider the points $y_0=0$ and $y_k=-\frac{1}{k}$ for $k \in \mathbb{N}$. Both $y_0$ and $y_k$ lie within $\hat{R}_0(x_\star)$. However,
\begin{equation*}
    \frac{\|T(y_0)-T(y_k)\|}{\|y_0-y_k\|} = \frac{|1 - (1-\sqrt{1/k})|}{|1/k|} = \frac{\sqrt{1/k}}{1/k} = \sqrt{k} \to +\infty \text{ as } k \to +\infty.
\end{equation*}
Consequently, there is no finite constant $L_0$ satisfying the Lipschitz condition on $\hat{R}_0(x_\star)$, proving the assertion from~\cite{khobotov1987modification} false in the general continuous case.
\end{appendices}
\end{document}